\theoremstyle{plain}
\newtheorem{thm}{Theorem}[section]
\newtheorem{prop}[thm]{Proposition}
\newtheorem{cor}[thm]{Corollary}
\newtheorem{lemma}[thm]{Lemma}
\theoremstyle{definition}
\newtheorem{definition}[thm]{Definition}
\newtheorem{remark}[thm]{Remark}
\newtheorem{example}[thm]{Example}
\numberwithin{equation}{section}
\newcommand{\Hom}{\mathrm{Hom}}
\renewcommand{\P}{\mathbb{P}}
\newcommand{\A}{\mathbb{A}}
\newcommand{\Z}{\mathbb{Z}}
\newcommand{\N}{\mathbb{N}}
\newcommand{\Proj}{\mathrm{Proj}}
\newcommand{\spec}{{\rm Spec}}
\newcommand{\Span}{\mathrm{Span}}
\newcommand{\codim}{\mathrm{codim}}
\newcommand{\initial}{\mathrm{in}}
\newcommand{\mm}{\mathfrak{m}}
\newcommand{\bfa}{\mathbf{a}}
\newcommand{\bfb}{\mathbf{b}}
\newcommand{\bfc}{\mathbf{c}}
\newcommand{\bfd}{\mathbf{d}}
\newcommand{\bfe}{\mathbf{e}}
\newcommand{\bfv}{\mathbf{v}}
\def\z{{\zeta}}
\def\a{{\alpha}}
\def\bfd{{\bf d}}
\def\ZZ{{\Bbb Z}}
\def\PP{{\P}}
\def\S{{\Sigma}}
\def\cO{{\mathcal O}}
\DeclareMathOperator{\Sym}{\rm Sym}
\title{Correspondence scrolls}
\author{David Eisenbud \and Alessio Sammartano}
\address[David Eisenbud]{Mathematical Sciences Research Institute, 17 Gauss Way, Berkeley, CA 94720, USA}
\email{de@msri.org}
\address[Alessio Sammartano]{Dpartment of Mathematics, University of Notre Dame, 255 Hurley, Notre Dame, IN 46556, USA}
\email{asammart@nd.edu}
\subjclass[2010]{Primary: 14J40. Secondary: 13H10; 13C40; 13P10;	14J26; 14J28; 14J32; 14M05; 14M12; 14M20}
\keywords{Rational normal scroll; Veronese embedding; join variety; multiprojective space; variety of complexes; variety of minimal degree; double structure; K3  surface; Calabi-Yau scheme; Gorenstein ring; Gr\"obner basis.} 
\begin{document}
\begin{abstract}
This paper  initiates the study of a class of schemes that we call
\emph{correspondence scrolls}, which includes the rational normal scrolls and linearly embedded projective bundle of decomposable bundles, 
as well as degenerate K3 surfaces, 
Calabi-Yau 3-folds, and many other examples.
\end{abstract}

\maketitle 

\section{Introduction}
We will define and study a class of schemes that we call
\emph{correspondence scrolls}. The origin of our interest was in a paper by Frank Schreyer and the first author on the equations and syzygies of degenerate K3 surfaces such as K3 carpets~\cite{EiSc}. Correspondence scrolls are a natural generalization of rational normal scrolls and K3 carpets that includes families of (degenerate) Calabi-Yau 3-folds and many other
examples.

We will define a correspondence scroll $C(Z;\bfb)$ for any  subscheme $Z\subseteq \P^\bfa := \P^{a_1}\times \cdots\times \P^{a_n}$, and any $n$-tuple of non-negative integers
$\bfb = (b_1,\dots,b_n)$. In the special case where $Z$ is reduced, we may define $C(Z;\bfb)$ as follows: 
embed  $\P^{a_i}$ by the $b_i$-th Veronese embedding $\nu_{b_i}:\P^{a_i}\to \P^{{a_i+b_i\choose a_i}-1}$ into a general linear subspace of 
$\P^N$, where $N = \sum_i {a_i+b_i\choose a_i}-1$, and set
$$
C(Z;\bfb) = \bigcup_{(p_1,\dots,p_n)\in Z} \overline{\nu_{b_1}(p_1),\dots, \nu_{b_n}(p_n)}.
$$
Thus $C(Z;\bfb)$ is a union of $(n-1)$-planes. 

In this paper we determine the dimension, degree and multi-graded Hilbert function of
a scheme of the form $C(Z;\bfb)$, and for which $Z$ they are nonsingular for every $\bfb$.
We explain the primary decompositions and Gr\"obner bases of their defining ideals,
 and we determine which ones are Cohen-Macaulay,  Gorenstein, or  numerically Calabi-Yau.
 We give numerous examples, including some new (as far as we know) examples of degenerate Calabi-Yau 3-folds.
 
Recall that rational normal scrolls are the varieties of minimal degree in $\P^{N}$ that contain linear spaces of codimension 1 (the only other varieties of minimal degree are the cones over the Veronese surface in $\P^{5}$, see~\cite{EiHa} for an expository account). 
Perhaps because of their extremal properties, they appear in many contexts in algebraic geometry, for example as ambient spaces of Castlenuovo curves (see for example \cite{Ha}) and canonical curves (see for example \cite{Sch}) and as images of canonical maps of certain varieties of general type (see for example \cite{GaPu1}).

Gallego and Purnaprajna prove in \cite{GaPu} that, on each 2-dimensional rational normal scroll, there is a unique double structure of a scheme that is a degenerate K3 surface in a natural sense. They called these schemes K3 carpets. Some of the interest in these schemes 
comes because the hypersurface section of a K3 carpet is a canonical curve of controlled genus and Clifford index, and this is the point of view taken in \cite{EiSc}.

In the study of K3 carpets in~\cite{EiSc} the authors mention that the equations of K3 carpets can be described as equations of varieties of complexes, coming from certain correspondences of type $(2,2)$ in $\P^{1}\times\P^{1}$. In this paper we  generalize the construction, and show that the resulting ``correspondence scrolls''  have algebraic properties that are frequently easy to analyze.

Here is the general definition:

\begin{definition}[Correspondence scroll]\label{DefinitionCorrespondenceScroll}
Given  a vector $\bfa= (a_1, \ldots, a_n) \in \N_+^n$ and a field $\Bbbk$, consider the polynomial ring 
$$A = \Bbbk\big[x_{i,j} \, :\, 1\leq i \leq n, \, 0 \leq j \leq a_i\big]$$
equipped with the standard $\Z^n$-grading $\deg(x_{i,j}) = \bfe_i \in \N^n$.
The ring $A$ is  the Cox ring of $\P^\bfa := \prod_{i=1}^n\P^{a_i}$ as well as the coordinate ring of 
$ \prod_{i=1}^n\A^{a_i+1}$.
Let $Z $ be a subscheme of $ \prod_{i=1}^n\A^{a_i+1}$ defined by a multigraded ideal $I \subseteq A$. 
Let $\bfb \in \N_+^n$ be another vector and $N = \sum {a_i +b_i \choose a_i} -1$.
We define the \emph{correspondence scroll} $C(Z; \bfb)\subseteq \P^N$ to be the scheme defined by the kernel of the map
$$
S =\Bbbk[z_{i,\alpha}] \to A/I\,:\quad z_{i,\alpha} \mapsto x_i^\alpha,\quad |\alpha| = b_i
$$
where $z_{i,\alpha}$ are variables of degree 1, 
$x_i^\alpha$ denotes a monomial of degree $b_i$ that is the product $x_{i,0}^{\alpha_0}\cdots x_{i, a_i}^{\alpha_{a_i}}$ 
and the indices $\alpha$ on $z_{i,\alpha}$ have weight $b_i$.
\end{definition}

In many cases of interest, $Z$ comes from a closed subscheme of $\P^\bfa$;
 that is, $I$ has no primary component whose radical contains one of the ideals $(x_{i,0},\dots,x_{i,a_i})$.
 Such a subscheme $Z$ is called a \emph{correspondence}.
The scheme $C(Z; \bfb)$ is then set-theoretically the union of the projective $(n-1)$-planes joining the points $p_1,\dots,p_n$
that are ``in correspondence'', in the sense that $(p_1,\dots,p_n) \in Z$, whence
the name we have given the construction.

\begin{example}\label{first examples}
If $I = 0$ then $C(Z; \bfb)\subseteq \P^N$ is  the join variety of
the $b_i$-th Veronese embeddings of $\P^{a_i}$, for $i=1,\dots, n$.

 For a less trivial example we take $a_1 = \cdots =a_n =1$ and take $Z$ to  be the ``small diagonal"
$$
\Delta := \Big\{(p_1,\dots, p_n) \in \prod_n \P^1 \,\big|\, p_1 = \cdots =p_n\Big\}\,.
$$
 In this case $C(Z; \bfb)$ is the rational normal scroll of type $b_1,\dots,b_n$, which we denote by $\S(b_{1}, \dots, b_{n})$.
\end{example}

 We say that a projective scheme $X$ is Calabi-Yau if $\cO_X$ has no intermediate cohomology  and $\omega_X \cong \cO_X$. 
If $X$ is 2-dimensional we say that it is K3. 
These definitions reduce to the usual definitions when the scheme is smooth. 
We will be interested in embeddings of these schemes where they are arithmetically Cohen-Macaulay as well, and then they can be described as those schemes whose homogeneous coordinate rings are Gorenstein of $a$-invariant 0.

 \begin{example}[K3 surfaces]\label{K3 Carpet example} 
For each scroll surface  $C(\Delta;\bfb)=\S(b_1,b_2)\subseteq \P^{b_1+b_2+1}$ there exists a unique K3 double structure supported on it, called a K3 carpet $K(b_{1}, b_{2})$, as shown in \cite{GaPu}. 
The scheme $K(b_{1}, b_{2})$ is arithmetically Cohen-Macaulay,  has degree twice the degree of the scroll $\S(b_{1}, b_{2})$, 
and the hyperplane section of $K(b_{1}, b_{2})$ is a canonically embedded rational ribbon of genus $b_{1}+b_{2}+1$ and Clifford index $\min(b_{1}, b_{2})$.

As described by Eisenbud and Schreyer in \cite{EiSc}, the K3 carpet $K(b_{1}, b_{2})$ is the correspondence scroll $C(2\Delta;\bfb)$, where $\Delta \subseteq \P^1\times \P^1$ denotes the  diagonal as above.

Generalizing the properties given for these examples in \cite{EiSc} was the original motivation for this paper. 
For instance, $K(b_{1}, b_{2})$ is  numerically K3 by Theorem \ref{TheoremGorenstein} and it
has degree $2(b_1+b_2)$ by Corollary \ref{CorollaryDegreeDivisor}.
\end{example}

 \begin{example}[Calabi-Yau threefolds]\label{CY3} 
 As a first extension of the theory above, we note that
Corollary \ref{CorollaryCompleteIntersection} yields examples of  Calabi-Yau threefolds.
For instance,
if $Z$ is a divisor of type $(3,2)$ in $\P^2 \times \P^1$, 
then  $C(Z;\bfb)\subseteq \P^{{b_1+2 \choose 2} + b_2}$ is a Calabi-Yau threefold for every $\bfb=(b_1,b_2)$, and 
it has degree $3b_1b_2+2b_1^2$ by Corollary \ref{CorollaryDegreeDivisor}.
\end{example}

\begin{example}[Schemes with irrelevant components]\label{CY2}
Another family of examples of Calabi-Yau threefolds is given by taking $Z$ to be
a complete intersection of two trilinear hypersurfaces in
$\A^2\times\A^2\times\A^2$. 
Then  $C(Z;\bfb)\subseteq \P^{b_1+b_2+b_3+2}$ is numerically a Calabi-Yau threefold for every $\bfb$. The scheme $Z$ cannot be considered as a subscheme of 
$\P^1\times\P^1\times\P^1$, since each of the 3 ``irrelevant'' ideals is necessarily a component. In this case $C(Z;\bfb)$ has degree 
$b_2b_3+b_1b_3+b_1b_2+2(b_1+b_2+b_3)$. See Example~\ref{CY2a} for an explanation.
\end{example}

\section{Defining ideal, Dimension and Degree}

The ideal of $C(Z;\bfb)\subseteq \P^N$ is easy to describe explicitly. 
First, the ideal of the join variety $C(\P^\bfa; \bfb)$ is just the sum of the ideals $I_1, \dots, I_n$ of the different Veronese varieties $\nu_{b_i}(\P^{a_i})\subseteq \P^N$. These may be expressed in well-known ways as ideals of $2$-minors of matrices of linear forms---see for example \cite{Ei}. 
The ideal of $C(Z;\bfb)$ is thus of the form
$J+\sum_{i=1}^n I_i$, where $J$ is derived from the ideal of $Z$ as follows:

\begin{prop}\label{generators}
 With notation above, suppose that the ideal $I$ of $Z\subset \prod_{i=1}^n\A^{a_i+1}$ is generated by multigraded forms
 $g_1, \ldots, g_s$, with $\deg(g_j)=(d_{j,1}, \dots, d_{j,n})$. 

 Let  $J$ be the ideal of $S$ generated by the pullback of the forms in
\def\bfe{{\bf e}}
$
g_j\cdot A_{\bfc_j},
$
where $\bfc_j = (c_{j,1}, \ldots, c_{j,n})$  and the $c_{j,i}$ are chosen so that $d_{j,i}+c_{j,i}$ is the smallest
multiple of $b_i$ that is $\geq d_{j,i}$.

The ideal of $C(Z; \bfb)$  is equal to $J+ \sum_{i=1}^n I_i$.
\end{prop}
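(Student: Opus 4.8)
The plan is to factor the defining map $\phi\colon S\to A/I$ of $C(Z;\bfb)$ through the ``multi-Veronese'' subring of $A$, so that the Veronese ideals $I_i$ and the correspondence data $J$ separate cleanly. Set $R:=\bigoplus_{\mathbf{k}\in\N^n}A_{\mathbf{k}\cdot\bfb}\subseteq A$, where $\mathbf{k}\cdot\bfb:=(k_1b_1,\dots,k_nb_n)$, and write $\bfd_j:=\deg(g_j)$. Since any monomial of multidegree $\mathbf{k}\cdot\bfb$ is a product of monomials of the multidegrees $b_i\bfe_i$, the $\Bbbk$-algebra $R$ is generated by the monomials $x_i^\alpha$ with $|\alpha|=b_i$; hence $\phi$ factors as $S\xrightarrow{\ \psi\ }R\hookrightarrow A\twoheadrightarrow A/I$ with $\psi(z_{i,\alpha})=x_i^\alpha$ surjective onto $R$. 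Consequently $\ker\phi=\psi^{-1}(I\cap R)$, and since $\psi^{-1}(\psi(J))=J+\ker\psi$ for any ideal $J$, the proposition reduces to the two claims (a) $\ker\psi=\sum_{i=1}^n I_i$ and (b) $I\cap R=\psi(J)$; here $\psi(J)$ is the ideal of $R$ generated by the products $g_j\cdot m$ with $m$ a monomial of multidegree $\bfc_j$, because the chosen generators of $J$ are pullbacks of such forms.

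For (a) I would use the factorizations $R\cong R_1\otimes_\Bbbk\cdots\otimes_\Bbbk R_n$ and $S\cong S_1\otimes_\Bbbk\cdots\otimes_\Bbbk S_n$, with $S_i=\Bbbk[z_{i,\alpha}:|\alpha|=b_i]$ and $R_i$ the $b_i$-th Veronese subalgebra of $\Bbbk[x_{i,0},\dots,x_{i,a_i}]$, under which $\psi=\psi_1\otimes\cdots\otimes\psi_n$ and $\ker\psi_i=I_i$ is the classical $2\times2$-minor presentation of the Veronese. Over a field the kernel of a tensor product of surjections is the sum of the individual kernels, which gives (a).

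For (b), the inclusion $\psi(J)\subseteq I\cap R$ is immediate, since each $g_j\cdot m$ lies in $I$ and has multidegree $\bfd_j+\bfc_j$, a multiple of $\bfb$ by the choice of $\bfc_j$. The reverse inclusion is the crux. Because $I$ and $R$ are $\Z^n$-graded, it suffices to treat a multihomogeneous $f\in I\cap R$, which then has multidegree $\mathbf{k}\cdot\bfb$ for some $\mathbf{k}\in\N^n$. Write $f=\sum_j h_jg_j$ with $h_j$ multihomogeneous of multidegree $\mathbf{k}\cdot\bfb-\bfd_j$ (and $h_j=0$ whenever this vector is not in $\N^n$). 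The defining property of $\bfc_j$---that $\bfd_j+\bfc_j$ is the componentwise least multiple of $\bfb$ lying above $\bfd_j$---forces $\mathbf{k}\cdot\bfb-\bfd_j\geq\bfc_j$ componentwise, so every monomial occurring in $h_j$ splits as (a monomial of multidegree $\bfc_j$) times (a monomial of multidegree $\mathbf{k}\cdot\bfb-\bfd_j-\bfc_j$), and the latter lies in $R$ since $\mathbf{k}\cdot\bfb-\bfd_j-\bfc_j$ is again a multiple of $\bfb$. Regrouping then exhibits $f$ as an $R$-linear combination of the generators $g_j\cdot m$ of $\psi(J)$, proving (b).

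I expect the last bookkeeping step to be the only genuine obstacle, and it is precisely the extremality in the choice of $\bfc_j$ that makes it work: peeling off a factor of multidegree $\bfc_j$ from each term is legitimate exactly because the remaining cofactor then has multidegree a multiple of $\bfb$, hence lies in $R$ and admits a pullback to $S$. Everything else---claim (a), the tensor factorizations of $R$ and $S$, and the reductions in the first paragraph---is routine bookkeeping once the $\Z^n$-grading is set up.
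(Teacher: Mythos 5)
Your proof is correct and takes essentially the same route as the paper: identify $S/\sum_{i=1}^n I_i$ with the multi-Veronese subring $R=\bigoplus_{\mathbf{k}}A_{\mathbf{k}\cdot\bfb}$ and reduce the claim to showing that $I\cap R$ is generated, as an ideal of $R$, by the forms in $g_j\cdot A_{\bfc_j}$. You also supply the degree bookkeeping (that $h_j\neq 0$ forces $\mathbf{k}\cdot\bfb-\bfd_j\geq\bfc_j$ componentwise, with cofactor degrees again multiples of $\bfb$) that the paper compresses into ``from which the conclusion follows,'' and that step is carried out correctly.
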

\begin{proof}
The homogeneous coordinate ring of the join variety, which is defined by the ideal
$\sum_{i=1}^n I_i$,  may be identified with the subring of $A$ generated by the elements of
multidegrees $(r_1,\dots, r_n)$ such that each $r_i$ is a multiple of $b_i$. With this identification, the ideal of $C(Z;\bfb)$ in the join variety is the intersection of the ideal of $Z$ with the subring
generated by these elements, from which the conclusion follows.
\end{proof}

Recall that the  Chow ring  of $\P^\bfa = \P^{a_1}\times \cdots\times \P^{a_n}$  
is 
$$
\ZZ[\z_{1}, \dots, \z_{n}]/(\z_{1}^{1+a_{1}},\dots,\z_{n}^{1+a_{n}}).
$$

\begin{thm}\label{dim and deg}
 Let $I\subset A$ be a multigraded ideal and let $Z$ be the corresponding subscheme of $\prod_{i=1}^n\A^{a_i+1}$. 

\begin{enumerate}
\item  The dimension of 
 $C(Z;\bfb) \subseteq \P^N$ is one less than the dimension of 
 $Z\subset \prod_{i=1}^n\A^{a_i+1}.$ 

\item If the multigraded Hilbert function of $A/I$ is
$
H_{A/I}(t_1,\dots, t_n)
$
then  the Hilbert function of $C(Z;\bfb)$ is
$$
H_{C(Z;\bfb)}(s) = \sum_{\sum t_i = s } H_{A/I}(b_1t_1,\dots, b_nt_n).
$$
\item If $Z\subset \P^\bfa$ is a subscheme 
of dimension $d$ and class 
$$
c(Z) = \sum_{\a \in \ZZ^{n}} r_{\a}\z^{\bfa- \a} \in {\rm Chow}(\PP^{\bfa})
$$
then  the degree of $C(Z;\bfb)$ is
$$
\sum_{|\a| = d}\ r_\alpha  \prod_{i = 1}^{n}b_{i}^{\a_{i}}.
$$
\end{enumerate}
\end{thm}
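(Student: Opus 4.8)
The plan is to deduce all three parts from one explicit description of the homogeneous coordinate ring of $C:=C(Z;\bfb)$. By definition this ring is $R:=\Image\bigl(S\to A/I\bigr)$, and, exactly as in the proof of Proposition~\ref{generators}, every monomial of multidegree $(b_1t_1,\dots,b_nt_n)$ is a product of $t_1+\dots+t_n$ of the generators $z_{i,\alpha}$, so
$$
R=\bigoplus_{t\in\N^n}(A/I)_{(b_1t_1,\dots,b_nt_n)}\ \subseteq\ A/I,\qquad
R_s=\bigoplus_{t_1+\dots+t_n=s}(A/I)_{(b_1t_1,\dots,b_nt_n)}.
$$
Taking $\Bbbk$-dimensions of $R_s$ is precisely part (2). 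For part (1), each class $\overline{x_{i,j}}\in A/I$ is integral over $R$, since it satisfies $X^{b_i}-\overline{x_{i,j}}^{b_i}=0$ with $\overline{x_{i,j}}^{b_i}\in R_1$; thus $A/I$ is a finite $R$-module, so $\dim R=\dim A/I=\dim Z$ and $\dim C=\dim R-1=\dim Z-1$.

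For part (3) assume $Z\subseteq\P^\bfa$ is a correspondence of dimension $d$, so its multi-cone $\widehat Z\subseteq\prod_i\A^{a_i+1}$ has dimension $d+n$ and hence, by (1), $\dim C=d+n-1$; therefore $\deg C=(d+n-1)!$ times the leading coefficient of the Hilbert polynomial of $C$. I read this coefficient off part (2) using two facts (write $\alpha!=\alpha_1!\cdots\alpha_n!$). First, for $u$ sufficiently positive $H_{A/I}(u)$ agrees with the multigraded Hilbert polynomial $P_Z(u)=\chi(\cO_Z(u))$, and by Hirzebruch--Riemann--Roch on $\P^\bfa$ together with $\int_{\P^\bfa}c(Z)\cdot\zeta^\alpha=r_\alpha$, the top-degree part of $P_Z$ is $\sum_{|\alpha|=d}\tfrac{r_\alpha}{\alpha!}\,t_1^{\alpha_1}\cdots t_n^{\alpha_n}$. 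Granting the boundary estimate below, part (2) then gives
$$
H_C(s)=\!\!\sum_{t_1+\dots+t_n=s}\!\!P_Z(b_1t_1,\dots,b_nt_n)+O(s^{d+n-2})
=\sum_{|\alpha|=d}\frac{r_\alpha\prod_i b_i^{\alpha_i}}{\alpha!}\!\!\sum_{t_1+\dots+t_n=s}\!\!\prod_i t_i^{\alpha_i}+O(s^{d+n-2}),
$$
and the elementary asymptotics $\sum_{t_1+\dots+t_n=s}\prod_i t_i^{\alpha_i}\sim\frac{\alpha!}{(d+n-1)!}\,s^{d+n-1}$ (valid when $|\alpha|=d$) collapse the leading coefficient to $\frac{1}{(d+n-1)!}\sum_{|\alpha|=d}r_\alpha\prod_i b_i^{\alpha_i}$, which is the asserted degree.

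The main obstacle is the boundary estimate: the lattice points with $\sum t_i=s$ and some $t_i$ bounded must contribute only $O(s^{d+n-2})$, and this is exactly where the hypothesis that $Z$ is an honest subscheme of $\P^\bfa$ (and not a scheme with irrelevant components, cf.\ Example~\ref{CY2}) is used. The point is that for any subset $T\subseteq\{1,\dots,n\}$, the cone $\widehat Z$ is stable under rescaling the factors indexed by $T$ and, having no component contained in any $V(x_{i,0},\dots,x_{i,a_i})$, is not contained in such a coordinate subspace; hence the projection forgetting those factors has all fibers of dimension $\ge|T|$, so $\dim\widehat Z$ drops by $|T|$ under it. Consequently, fixing the multidegree in the coordinates indexed by $T$ yields a slice of $H_{A/I}$ agreeing, for the remaining $n-|T|$ arguments large, with a polynomial of total degree $\le d$; summing such a polynomial over an $(n-1-|T|)$-dimensional boundary face of $\{\sum t_i=s\}$ produces a term of order $\le s^{d+n-1-|T|}\le s^{d+n-2}$, and a short induction on $|T|$ handles the finitely many faces and the lower-order corrections inside each. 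As an alternative one can compute $\deg C$ geometrically: a general codimension-$(d+n-1)$ linear section of $C$, transported to $Z$ along the structure map whose fibers are the spanned $(n-1)$-planes, becomes the locus where a generic bundle map $\bigoplus_i\cO_{\P^\bfa}(-b_i\bfe_i)\to\cO_{\P^\bfa}^{\oplus(d+n-1)}$ drops rank, whose Thom--Porteous class is the degree-$d$ part of $\prod_i(1-b_i\zeta_i)^{-1}=\sum_{|\alpha|=d}\bigl(\prod_i b_i^{\alpha_i}\bigr)\zeta^\alpha$; pairing with $c(Z)$ gives the same formula, at the cost of a Bertini-type transversality argument and a verification that the locus of degenerate planes contributes nothing.
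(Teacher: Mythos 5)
Your parts (1) and (2) are essentially the paper's own argument: the paper likewise observes that $A/I$ is an integral extension of the homogeneous coordinate ring of $C(Z;\bfb)$ (you add the explicit integral equations $X^{b_i}-\overline{x_{i,j}}^{\,b_i}$, which the paper omits), and reads the Hilbert function off the identification $R_s=\bigoplus_{\sum t_i=s}(A/I)_{(b_1t_1,\dots,b_nt_n)}$. Part (3) is where you take a genuinely different route. The paper argues that the degree depends linearly on the top-dimensional part of the class, reduces to the monomial case $c(Z)=\zeta^{\bfa-\a}$, i.e.\ $Z=\prod\P^{\a_i}$, identifies $C(Z;\bfb)$ there with the join of the Veronese varieties $\nu_{b_i}(\P^{\a_i})$, and counts the $\prod_i b_i^{\a_i}$ planes cut out by a general linear space of complementary dimension. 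You instead extract the degree as $(d+n-1)!$ times the leading coefficient of the Hilbert polynomial of $C(Z;\bfb)$, combining part (2) with the identification of the top part of the multigraded Hilbert polynomial of $Z$ as $\sum_{|\a|=d}r_\a u^\a/\a!$ and the simplex asymptotics for $\sum_{\sum t_i=s}\prod_i t_i^{\a_i}$. Both routes are sound and in a sense complementary: the paper's linearity reduction is itself most naturally justified by exactly the Hilbert-polynomial computation you make explicit, while your version shows precisely where the hypothesis that $Z$ is an honest subscheme of $\P^\bfa$ (no irrelevant components) enters, namely in the boundary estimate; the cost is the lattice-point bookkeeping. One small imprecision there: it is not true that \emph{all} fibers of the projection forgetting the factors indexed by $T$ have dimension at least $|T|$ (a fiber can reduce to the origin of the $T$-blocks); what holds, and suffices for the dimension drop of the image, is that the generic fiber over each component is at least $|T|$-dimensional, since the $(\Bbbk^{*})^{|T|}$-orbit of a point with all blocks nonzero has dimension $|T|$ and such points are dense in each component exactly because no component lies in a coordinate subspace. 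The concluding Thom--Porteous alternative is not needed and, as you note, would require additional transversality arguments.
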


\begin{proof} 
(1) The coordinate ring of $Z\subset \prod_{i=1}^n\A^{a_i+1}$ and the homogeneous coordinate ring of  $C(Z;\bfb) \subseteq \P^N$ have the same Krull dimension, 
because the former is an integral extension of the latter. 

\noindent(2)
The multigraded Hilbert function of $C(Z;\bfb)$ is 
$$
H_{C(Z;\bfb)} (t_1,\dots,t_n) = H_{A/I}(b_1t_1,\dots, b_nt_n),
$$
and the formula follows.

\noindent (3) The degree is a linear function of the top dimensional part of the Chern class of $Z$,
so it suffices to do the case where the class of $Z$ is the monomial $\zeta^{\bfa -\a}$, and we may assume
$Z = \prod \PP^{\a_i}\subseteq \prod \PP^{a_i}$. Since the restriction of the $b_i$-th Veronese map 
$\nu_{b_i}: \P^{a_i}\to \P^{{a_i+b_i\choose a_i}-1}$ to a linear subspace is again of the form
$\nu_{b_i}$, we see that $C(Z;\bfb)$ is the join of the $b_i$-th Veronese embeddings of the $\P^{\a_i}$.

The degree of the $b_{i}$-th Veronese embedding  $\nu_{b_i}(\P^{\a_{i}})$ is $b_{i}^{\a_{i}}$. 
Thus a general plane in $\P^N$ of codimension equal to $\dim C(Z;\bfb) = \sum_i a_i$ meets the linear span
of $\nu_{b_i}(\P^{\a_i})$ in $b_{i}^{\a_{i}}$ points, and thus meets $C(Z;\bfb)$ in the disjoint union of the  $(n-1$)-planes 
spanned by one point from each of the  intersections with the $\nu_{b_i}(\P^{\a_i})$, a total
of $\prod_{i=1}^n b_i^{\a_i}$ planes, which has degree $\prod_{i=1}^n b_i^{\a_i}$ as required.
\end{proof}

For example,  Theorem \ref{dim and deg} gives the well-known degree of the  the rational normal scroll $\S(b_{1}, \dots, b_{n})\subseteq \PP^{N}$ as $\sum_{i=1}^n b_{i}$.

We single out the interesting case of a divisor in $\P^\bfa$.

\begin{cor}\label{CorollaryDegreeDivisor}
Let  $Z\subset \P^\bfa$ be a divisor of type $(d_1, \ldots, d_n)$, 
then
$$
\deg C(Z;\bfb)=\sum_{i=1}^n d_i b_i^{a_i-1}\prod_{j\neq i} b_j^{a_j}.
$$
\end{cor}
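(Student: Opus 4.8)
The plan is to read this off directly from part (3) of Theorem~\ref{dim and deg}: the only real work is to identify the Chow class of a divisor of type $(d_1,\dots,d_n)$ and to substitute it into the degree formula given there.

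First I would record the class. A divisor $Z\subset\P^\bfa$ of type $(d_1,\dots,d_n)$ is the zero scheme of a section of the line bundle $\cO_{\P^\bfa}(d_1,\dots,d_n)=\bigotimes_{i=1}^n\cO_{\P^{a_i}}(d_i)$ (pulled back from the factors), so its associated cycle class is the first Chern class of that bundle, namely
$$
c(Z)=\sum_{i=1}^n d_i\,\z_i \in \mathrm{Chow}(\P^\bfa),
$$
where $\z_i$ is the pullback of the hyperplane class from the $i$-th factor, as in the presentation of the Chow ring recalled above. (This holds regardless of whether $Z$ is reduced or irreducible, which is why "type $(d_1,\dots,d_n)$" makes sense here.) Next I would rewrite this in the normalized shape $c(Z)=\sum_{\a}r_\a\,\z^{\bfa-\a}$ used in Theorem~\ref{dim and deg}(3): the monomial $\z_i$ equals $\z^{\bfa-\a}$ exactly when $\bfa-\a=\bfe_i$, i.e.\ when $\a=\a^{(i)}:=(a_1,\dots,a_{i-1},a_i-1,a_{i+1},\dots,a_n)$, and $|\a^{(i)}|=\big(\sum_j a_j\big)-1=\dim Z=:d$. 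Hence $r_{\a^{(i)}}=d_i$ for $i=1,\dots,n$ and all other coefficients $r_\a$ with $|\a|=d$ vanish.

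Finally I would plug this into the formula $\deg C(Z;\bfb)=\sum_{|\a|=d}r_\a\prod_{k=1}^n b_k^{\a_k}$ of Theorem~\ref{dim and deg}(3), obtaining
$$
\deg C(Z;\bfb)=\sum_{i=1}^n d_i\prod_{k=1}^n b_k^{\a^{(i)}_k}=\sum_{i=1}^n d_i\,b_i^{a_i-1}\prod_{k\neq i}b_k^{a_k},
$$
which is the asserted identity. There is no serious obstacle in this argument; the only points requiring a bit of care are the bookkeeping of exponents in passing between the two forms of the Chow class, and making explicit that "divisor of type $(d_1,\dots,d_n)$" is to be read as a Weil divisor whose class is $\sum_i d_i\z_i$ even in the non-reduced case, so that Theorem~\ref{dim and deg}(3) applies verbatim.
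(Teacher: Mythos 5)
Your proposal is correct and follows exactly the paper's own (very brief) argument: identify the class of the divisor as $\sum_i d_i\z_i=\sum_i d_i\z^{\bfa-(\bfa-\bfe_i)}$ and substitute into the degree formula of Theorem~\ref{dim and deg}(3). Your version merely spells out the exponent bookkeeping that the paper leaves implicit.
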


\def\bfe{{\bf e}}
\begin{proof}
 The class of $Z$ is $\sum d_i\z_i = \sum_i d_i\zeta^{\bfa - (\bfa -\bfe_i)}$
 where $\bfe_i$ denotes the unit vector with a 1 in the $i$-th position and zeros elsewhere. 
\end{proof}
  
In the more general case where the ideal of $Z$ has irrelevant components we do not have such a simple formula for the degree.
Consider a multigraded ideal $I\subset A$, and $Z\subset \prod_{i=1}^n\A^{a_i+1}$ the corresponding affine scheme.
If $Z = \cup_{i}Z_{i}$ is a primary decomposition of $Z$, then $C(Z; \bfb)$ has a primary decomposition
$\cup_i C(Z_{i};\bfb)$ and the degree is the sum of the degrees of the primary components of maximal dimension, so it suffices to treat the primary case.

For example, suppose that $Q = Q'+Q''$, where  $Q'\subseteq(x_{1 ,0},\dots, x_{m,a_{m}})$ is a relevant primary ideal and $Q''$ is $(x_{m+1,0},\dots, x_{n,a_{n}})-$primary.
 Let
 $$
 A' =  \Bbbk[x_{1,0},\dots,x_{m,a_{m}}],\ \A' = \spec\,A',
 $$
 and let $Z'\subseteq \A'$ be the scheme defined by 
 $Q'\cap A'$.
Suppose first that
$b_{m+1}, \dots, b_{n}$ are sufficiently large so that $Q'' \supset \mm_{j}^{b_{j}}$ for
  $j= m+1,\dots, n$. In this case, the scheme $C(Z;\bfb)$ is contained in $\A'\subseteq \prod_{i=1}^n\A^{a_i+1}$, and coincides with $C(Z';(b_1,\dots, b_{m}))$. 
  In particular, the degree of $C(Z;\bfb)$ is equal to that of $C(Z';(b_1,\dots, b_{m}))$.

\begin{example}[Example~\ref{CY2} continued]\label{CY2a}
The ideal $I \subseteq \Bbbk[x_{1,0},\dots, x_{3,1}]$ generated by a regular sequence of two trilinear forms on
$\A^2\times\A^2\times\A^2$
has the three irrelevant ideals
$$
(x_{1,0},x_{1,1}), \, (x_{2,0},x_{2,1}), \, (x_{3,0},x_{3,1})
$$
among its primary components. If $Z_1 = V(x_{1,0},x_{1,1})$ then by the discussion
above $C(Z_1;(b_1,b_2,b_3))$ is the cone over the join of $\nu_{b_2}(\P^1)$ and $\nu_{b_3}(\P^1)$,  and thus of degree $b_2b_3$; likewise for the other irrelevant components. 
Furthermore, each trilinear form represents the class
$\z_1+\z_2+\z_3 \in {\rm Chow}(\P^1\times\P^1\times\P^1)$, so the ``relevant" part of the intersection is $(\z_1+\z_2+\z_3)^2 = 2(\z_1\z_2+\z_1\z_3+\z_2\z_3)$, which is twice the class of the diagonal embedding of $\P^1$, and for a subscheme of this class, Theorem~\ref{dim and deg}
implies that $\deg C(Z';(b_1,b_2,b_3)) = 2(b_1+b_1+b_3)$. 
Putting these together we get
$$
\deg C(Z;(b_1,b_2,b_3)) = (b_2b_3+b_1b_2 + b_2b_3) +2 (b_1+b_2+b_3).
$$
By Theorem~\ref{TheoremGorenstein} this is an arithmetically Cohen-Macaulay Calabi-Yau 3-fold in $\P^{b_1+b_2+b_3+2}$.
\end{example}

It would be interesting to investigate whether the Calabi-Yau threefolds of Example~\ref{CY2} are smoothable or smooth points in the Hilbert scheme, in analogy to the case of canonical ribbons and the K3 surfaces of Examples~\ref{K3 Carpet example} and \ref{K3 example}. 
See \cite{BaEi} and \cite{GaPu}.

For smaller values of $\bfb$, the scheme $C(Z;\bfb)$ is contained in an infinitesimal neighborhood of 
$\A'$, and has degree equal to  product of the degree of $C(Z';\bfb)$ and the length of the subring of 
$\Bbbk[x_{m+1,0},\dots, x_{n,a_{n}}]/Q''$ 
generated by the generators of the ideal $\sum_{i> m} \mm_{i}^{b_{i}}$.

\begin{example}
Suppose $n=2, a_1 = a_2 = 1$, and 
$$
I = (x_{1,0}^m,x_{1,1},x_{2,0})\subseteq \Bbbk[x_{1,0},x_{1,1},x_{2,0},x_{2,1}]
$$ 
for some $m\in \N$. 
If $Z\subseteq \A^2\times \A^2$ is the scheme corresponding to $I$ then 
$C(Z;(b_1,b_2))$ is the subscheme of $\P^N$ consisting of a
non-reduced point of degree $\lceil m/b_1\rceil$.  Indeed, the ideal of 
$C(Z;(b_1,b_2))\subseteq \P^{N}$  is 
$$
\Big(\big\{z_{1,\alpha} \mid \alpha\neq (b_1,0)\big\}\Big)
+\Big(z_{1,(b_1,0)}^{\lceil m/b_{1}\rceil}\Big)
+\Big(\big\{z_{2,\alpha} \mid \alpha\neq (0,b_{2})\big\}\Big). 
$$
Thus the degree formula of Theorem~\ref{dim and deg} holds whenever $b_1 \geq m$.
\end{example}

\section{Alternate Representations: Images of vector bundles and Varieties of complexes}\label{SectionBundles}

One of the standard descriptions of a rational normal scroll $\S(\bfb)$ is as the image of the projectivized
vector bundle $\PP(\oplus_{i}\cO_{\P^{1}}(b_{i}))$ under the linear series $\cO_{\P}(1)$. A similar description
is valid whenever $I$ has no irrelevant components, so that $Z$ may be considered as a projective scheme.

For each $i = 1, \ldots, n$ we consider the $b_i$-uple embedding
$$
\nu_{b_i} : \P^{a_i} \hookrightarrow \P^{{a_i +b_i\choose a_i}-1} \subseteq \P^N.
$$ 
Consider the incidence correspondence in $\PP^{\bfa} \times \PP^{N}$ given by
$$
\Gamma := \left\{(p, x)\in Z \times C(Z;\bfb)\mid  x \in \overline{\nu_{b_1}\pi_1(p),\dots,\nu_{b_n}\pi_n(p)}\right\}
$$
where $\pi_i : \PP^{\bfa} \rightarrow \PP^{a_i}$ are the projection maps.
Algebraically, $\Gamma$ is defined by the vanishing of the maximal minors of the matrix whose columns are the 
$\nu_{b_{i}}(z_i)$ and $p$. Since the points $\nu_{b_{1}}(z_1), \dots, \nu_{b_{n}}(z_n)$
are linearly independent, the projection $\Gamma \to \PP^{\bfa}$ makes $\Gamma$ into a
$(n-1)$-plane bundle over $\P^{n}$. Restricting this bundle to 
a scheme $Z\subset\PP^{\bfa}$ we get a bundle $\Gamma_{Z} \to Z$, 
and the variety $C(Z;\bfb)$ is the image of $\Gamma_{Z}$ under the other projection.

When $n=2$ and $Z$ is a divisor in $\PP^{1}\times \PP^{1}$ of bidegree $(b'_{1}, b'_{2}) \leq (b_{1}, b_{2})$, 
the  correspondence scroll can also be realized as a variety of complexes (cf. \cite{DeSt}). 
We give two examples.

\begin{example}[Rational Normal Scrolls]\label{scroll from cplx}
  The rational normal scroll $\S(b_1,b_2)$ of dimension 2 is the determinantal variety in $\P^{b_1+b_2+1}$ defined by the vanishing of the $2$-minors of the $2\times (b_1+b_2)$ matrix
$$
\begin{pmatrix}
z_{1,0}&\dots&z_{1,b_1-1}&&z_{2,0}&\dots&z_{2,b_2-1}\\
z_{1,1}&\dots&z_{1,b_1}&&z_{2,0}&\dots&z_{2,b_2}
\end{pmatrix}
$$
where the $z_{i,j}$ are the homogeneous coordinates. But we can rewrite the ``mixed'' minors as products:
$$
\det \begin{pmatrix}
z_{1,i}&z_{2,j}\\
z_{1,i+1}&z_{2,j+1}
\end{pmatrix}
=
\begin{pmatrix}
z_{1,i}&z_{1,i+1}
\end{pmatrix}
*
\begin{pmatrix}
0&1\\
-1&0
\end{pmatrix}
*
\begin{pmatrix}
z_{2,j}\\
z_{2,j+1}
\end{pmatrix}.
$$
Thus the ideal of  $\S(b_1,b_2)$ may be written as the sum of three ideals: the ideal of $2$-minors of the 
matrix 
$$
M_1 :=
\begin{pmatrix}
 z_{1,0}&\dots&z_{1,b_1-1}\\
z_{1,1}&\dots&z_{1,b_1}
\end{pmatrix}\,,
$$
the ideal of $2$-minors of the matrix
$$
M_2:= \begin{pmatrix}
z_{2,0}&\dots&z_{2,b_2-1}\\
z_{2,0}&\dots&z_{2,b_2}
\end{pmatrix}\,,
$$
and the entries of the composition
$$
\begin{pmatrix}
z_{1,0}&z_{1,1}\\
\vdots&\vdots\\
z_{1,b_{1}-1}&z_{1,b_{1}}
\end{pmatrix}
*
\begin{pmatrix}
0&1\\
-1&0
\end{pmatrix}
*
\begin{pmatrix}
z_{2,0}&\dots&z_{2,b_2-1}\\
z_{2,1}&\dots&z_{2,b_2}
\end{pmatrix}\,.
$$

As we have noted, the scroll $\S(b_1, b_2)$ is the variety $C(\Delta; (b_1, b_2))$,
and we may think of the matrix 
$\begin{pmatrix}
0&1\\
-1&0.
\end{pmatrix}
$
as representing the coefficients of the defining equation 
$f = x_{1,0}x_{2,1}-x_{1,1}x_{2,0}$ of the diagonal $\Delta$. The reason this works is that modulo the ideals of of minors of the matrices $M_1$ and $M_2$ we may make the identifications
 $z_{i,j}\equiv x_{i,0}^{b_i-j}x_{i,1}^j$, so the elements of the composition are exactly the forms defining the ideal $J$
in Proposition~\ref{generators}.
\end{example}

\begin{example}[K3 Carpets \cite{EiSc}] The K3 carpet $X(b_1, b_2)$ is the correspondence scroll $C(2\Delta; \bfb)$ where $2\Delta$ denotes
the double of the diagonal in $\PP^1\times \P^1$. The equation of $2\Delta$ is 
$$
f^2 = (x_{1,0}x_{2,1}-x_{1,1}x_{2,0})^2 = (x_{1,0}x_{2,1})^2
-2(x_{1,0}x_{2,1})(x_{1,1}x_{2,0})+(x_{1,1}x_{2,0})^2.
$$
Thus, applying the reasoning and the notation of Example~\ref{scroll from cplx}, and noting the the coefficient of $f^2$ are $(1, -2,1)$, we see that the ideal of the 
K3 carpet is the sum of the ideal of minors of $M_1$, the minors of $M_2$, and the ideal of entries of the composition
$$
 \begin{pmatrix} 
 z_{1,0} &  z_{1,1} &  z_{1,2} \cr
 z_{1,1} &  z_{1,2} &  z_{1,3} \cr
\vdots & \vdots & \vdots\cr
 z_{1,b_1-2} &  z_{1,b_1-1} &  z_{1,b_1} \cr
 \end{pmatrix}
 *
 \begin{pmatrix} 
 0& 0 &  1\cr
 0 & -2 & 0 \cr
 1 & 0 & 0 \cr
 \end{pmatrix} 
  *
  \begin{pmatrix} 
 z_{2,0} &   z_{2,1} & \ldots & z_{2,b_2-2} \cr
 z_{2,1} &  z_{2,2} & \ldots &  z_{2,b_2-1} \cr
  z_{2,2}&  z_{2,3} & \ldots &  z_{2,b_2} \cr
 \end{pmatrix} \,.
 $$
\end{example}

\section{Nonsingularity}
It is interesting to ask when $C(Z;\bfb)$ is nonsingular. We may suppose that
$C(Z;\bfb)$ is irreducible so, leaving aside trivial cases, we may take $Z$ to
be an irreducible subscheme of $\P^{a_1}\times\cdots\times \P^{a_n}$. 
Moreover, if one of the 
$b_i$ is 0 then  $C(Z;\bfb)$ is a cone, so we assume that all $b_i\geq 1$.

The rational normal scrolls
$\Sigma(\bfb) = C(\Delta; \bfb)$, where $\Delta$ is the small diagonal in $\P^1\times\cdots \times\P^1$,  are nonsingular if all the $b_i$ are positive, but
in general the answer will depend on $\bfb$. 
For example $C(\P^1\times \P^1;\bfb)$ is the join of the rational normal curves of degrees $b_1, b_2$. 
If $b_1=b_2=1$, then this variety is $\P^3$, and is thus nonsingular. 
But if either $b_1$ or $b_2$ is greater than $1$, then the join becomes singular. 
In fact, the dimension is  3, as long as all the $b_i$ are positive; but
if $b_2>1$ then the linear span of $\nu_{b_2}(\P^1)$ has dimension greater than $ 1$, and one can see from this that the tangent  space at a point of the form  $\nu_{b_1}(p)$ will have dimension $b_2+2>3$.

Using a similar argument, we will characterize those $Z$ such that $C(Z;\bfb)$ is nonsingular for some $\bfb$ whose components $b_i$ are all greater than  $ 1$.

In the case $n=1$, the scheme $C(Z;b_1)\subseteq \P^N$ is the $b$-th Veronese embedding of $Z$, and is thus isomorphic to $Z\subset \P^{a_1}$. The following useful result is an analogue for $n>1$.

 It will be convenient to use a  basis-independent notation.
 To this purpose,  we write $\P^{a_i} = \P(V_i)$,
where $V_i$ is a $\Bbbk$-vector space of dimension $a_i+1$,
  so that $A = \Sym(\oplus_{i=1}^nV_i), S = \Sym( \oplus_{i=1}^n\Sym^{b_i}{V_i})$ and 
$$
 \P^N = \P\left(\oplus_{i=1}^n\Sym^{b_i}{V_i}\right) \supset \coprod_{i=1}^n \P\left(\Sym^{b_i}V_i\right).
$$

\begin{lemma}\label{multihomogeneous}
 Let $Z\subset \prod_{i=1}^n\P(V_i)$ be a subscheme and $\Lambda \subseteq \{1, \ldots, n\}$.
 The following three subschemes of  $\P(\oplus_{i\in \Lambda}\Sym^{b_i}V_i)\subseteq \P^N$ are equal:
\begin{itemize}
\item[$(i)$] the scheme $C\big(\pi_\Lambda(Z); \bfb_\Lambda\big)$, where $\pi_\Lambda: \prod_{i=1}^n\P(V_i)\to \prod_{i\in\Lambda}\P(V_i)$ denotes the natural projection and  $\bfb_\Lambda$ the subvector  $(b_i \,: \, i \in \Lambda)$;
\item[$(ii)$] the projection of $C(Z; \bfb) $ from the linear subspace $\P\big(\oplus_{i\notin \Lambda}\Sym^{b_i}V_i\big)\subseteq\P^N$;
\item[$(iii)$] the intersection $C(Z;\bfb)\cap \P\big(\oplus_{i\in \Lambda}\Sym^{b_i}V_i\big).$
\end{itemize}

\end{lemma}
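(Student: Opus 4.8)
The plan is to reduce the equality of the three schemes to a single computation of homogeneous ideals in $S$, exploiting the $\Z^n$-grading that the coordinate ring of $C(Z;\bfb)$ inherits from $A$. First I would fix notation: write $A_\Lambda = \Sym\big(\bigoplus_{i\in\Lambda}V_i\big)\subseteq A$ for the polynomial subring on the variables $x_{i,j}$ with $i\in\Lambda$, and $S_\Lambda = \Bbbk[z_{i,\alpha}:i\in\Lambda]\subseteq S$, which is the homogeneous coordinate ring of $M:=\P\big(\bigoplus_{i\in\Lambda}\Sym^{b_i}V_i\big)$. Give $S$ the $\Z^n$-grading $\deg z_{i,\alpha}=\bfe_i$; then, since the defining map $S\to A/I$ of Definition~\ref{DefinitionCorrespondenceScroll} sends the piece of degree $(t_1,\dots,t_n)$ into the piece of degree $(b_1t_1,\dots,b_nt_n)$ of the naturally $\Z^n$-graded ring $A/I$, the ideal $\mathcal I:=\ker(S\to A/I)$ — the homogeneous ideal of $C(Z;\bfb)$ — is $\Z^n$-graded, $\mathcal I=\bigoplus_{\mathbf t}\mathcal I_{\mathbf t}$.

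The key step is an elementary observation about this grading: a monomial of $S$ has $\Z^n$-degree supported on $\Lambda$ exactly when it involves only the variables $z_{i,\alpha}$ with $i\in\Lambda$. Hence $S_\Lambda=\bigoplus_{\operatorname{supp}(\mathbf t)\subseteq\Lambda}S_{\mathbf t}$, and the quotient map $\rho\colon S\twoheadrightarrow S/(z_{i,\alpha}:i\notin\Lambda)=S_\Lambda$ is the identity on $S_{\mathbf t}$ for $\operatorname{supp}(\mathbf t)\subseteq\Lambda$ and zero on the remaining graded pieces. Applied to the graded ideal $\mathcal I$, this yields $\rho(\mathcal I)=\bigoplus_{\operatorname{supp}(\mathbf t)\subseteq\Lambda}\mathcal I_{\mathbf t}=\mathcal I\cap S_\Lambda$.

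Then I would identify each of the three schemes, as a closed subscheme of $M=\Proj S_\Lambda$, with $V(\mathcal I\cap S_\Lambda)$. For $(iii)$: the scheme-theoretic intersection $C(Z;\bfb)\cap M$ is $V\big(\mathcal I+(z_{i,\alpha}:i\notin\Lambda)\big)$, which inside $M$ is cut out by $\rho(\mathcal I)=\mathcal I\cap S_\Lambda$. For $(i)$: on the affine cones the scheme-theoretic image $\pi_\Lambda(Z)$ is cut out by $I_\Lambda:=I\cap A_\Lambda$, so $A_\Lambda/I_\Lambda\hookrightarrow A/I$, and therefore the homogeneous ideal $\ker\!\big(S_\Lambda\to A_\Lambda/I_\Lambda\big)$ of $C(\pi_\Lambda(Z);\bfb_\Lambda)$ equals $S_\Lambda\cap\ker(S\to A/I)=S_\Lambda\cap\mathcal I$. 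For $(ii)$: the linear projection of $C(Z;\bfb)$ away from the complementary space $\P\big(\bigoplus_{i\notin\Lambda}\Sym^{b_i}V_i\big)$ has scheme-theoretic image equal to $\Proj$ of the image of $S_\Lambda$ in $S/\mathcal I$, i.e.\ $V(S_\Lambda\cap\mathcal I)$. The three thus agree, which is the assertion.

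I expect the only real difficulty to be bookkeeping about scheme structures rather than anything substantial: one must fix that $(iii)$ is the scheme-theoretic intersection, that $\pi_\Lambda(Z)$ in $(i)$ is the scheme-theoretic image (whose ideal is cleanest to describe as $I\cap A_\Lambda$ on affine cones, avoiding saturation subtleties in the Cox ring), and that ``projection'' in $(ii)$ means scheme-theoretic image of the linear projection — noting that $C(Z;\bfb)$ actually meets the center of this projection, so the map is not a morphism, though $C(Z;\bfb)$ has no irreducible component contained in the center. Once these conventions are pinned down, the graded computation of the second paragraph makes the three defining ideals literally equal.
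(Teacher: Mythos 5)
Your proof is correct and follows essentially the same route as the paper's: both arguments reduce all three descriptions to the single ideal $\mathcal I\cap S_\Lambda$ by exploiting the $\Z^n$-grading, the key point in each being that a multigraded element of the defining ideal either lies in $S_\Lambda$ or is killed by passing modulo $(z_{i,\alpha}: i\notin\Lambda)$. Your explicit splitting via the retraction $\rho$ is just a cleaner packaging of the paper's generator-by-generator observation.
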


\begin{proof} 
Denote for simplicity $U_i = \Sym^{b_i} V_i$.
Let $I\subseteq \Sym(\oplus_{i=1}^n V_i)$ be the saturated multigraded ideal of $Z$.
The ideal of $C\big(\pi_\Lambda(Z); \bfb_\Lambda\big) \subseteq \P(\oplus_{i\in \Lambda} U_i)$ is obtained by first intersecting $I$ with the subring $\Sym(\oplus_{i\in \Lambda} V_i)$ and then taking the preimage  in $\Sym(\oplus_{i\in \Lambda} U_i)$.
If we take the preimage of $I$ in $\Sym(\oplus_{i=1}^n U_i)$ first, and then intersect with the subring $\Sym(\oplus_{i\in \Lambda} U_i)$, we obtain the same ideal, 
thus the subschemes $(i)$ and $(ii)$ coincide.

Let $J\subseteq \Sym(\oplus_{i=1}^n U_i)$ be the saturated multigraded ideal of $C(Z;\bfb)$.
The  subscheme  $C(Z;\bfb)\cap \P(\oplus_{i\in \Lambda}U_i)$ of $\P(\oplus_{i\in \Lambda}U_i)$ is defined by the ideal 
$$
\frac{J+\big(\sum_{i\notin\Lambda} U_i\big)}{\big(\sum_{i\notin\Lambda} U_i\big)}.
$$ 
Since $J$ is multihomogeneous, the ideal $\big(\sum_{i\notin\Lambda} U_i\big)$ contains all the
generators of $J$ whose multidegrees have nonzero components outside $\Lambda$,
so the defining ideal of $C(Z;\bfb)\cap \P(\oplus_{i\in \Lambda}U_i)$ in $\P(\oplus_{i\in \Lambda}U_i)$ is generated by the classes of  elements of $J$ whose multidegrees
have nonzero components only in $\Lambda$.
On the other hand, the  ideal of 
the projection of $C(Z; \bfb) $ from  $\P(\oplus_{i\notin \Lambda}U_i)$ 
is $J\cap \Sym(\oplus_{i\in \Lambda} U_i)$,
and this has the same set of generators, whence the subschemes $(ii)$ and $(iii)$ coincide.
\end{proof}

\begin{thm}\label{TheoremNonSingular} Suppose that $Z\subset \P^{a_1}\times\cdots \times \P^{a_n}$ is a subscheme. The following conditions are equivalent:
\begin{enumerate}
 \item The correspondence scroll $C(Z;\bfb)$ is nonsingular for all $\bfb$.
 \item The correspondence scroll $C(Z;\bfb)$ is nonsingular for some $\bfb = (b_1,\dots, b_n)$
 with  $b_i\geq 2$ for all $i$.
 \item The scheme $Z$ is nonsingular and the projections $\pi_i: Z\to \P^{a_i}$ are 
 isomorphisms onto their images.
\end{enumerate}
\end{thm}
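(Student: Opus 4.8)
The plan is to prove the cycle of implications $(1)\Rightarrow(2)\Rightarrow(3)\Rightarrow(1)$. The implication $(1)\Rightarrow(2)$ is immediate, since $\bfb=(2,\dots,2)$ is an admissible choice in $(2)$. For $(3)\Rightarrow(1)$ I would use the $(n-1)$-plane bundle $\Gamma_Z\to Z$ of Section~\ref{SectionBundles}. Condition $(3)$ says precisely that each $\pi_i$ identifies $Z$ with its image $Z_i:=\pi_i(Z)\subseteq\P^{a_i}$, so that a point of $Z$ is determined by any one of its coordinates; hence $p\ne q$ in $Z$ forces $p_i\ne q_i$ for \emph{every} $i$, and, because $\P^N=\P\bigl(\bigoplus_i\Sym^{b_i}V_i\bigr)$ is an internal direct sum, a point lying on both rulings $L_p$ and $L_q$ would have to satisfy $\nu_{b_i}(p_i)=\nu_{b_i}(q_i)$ for all $i$, which is impossible. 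Thus the rulings are pairwise disjoint, so $\Gamma_Z\to C(Z;\bfb)$ is bijective. Since $Z$ is smooth, $\Gamma_Z$ is smooth, and a Jacobian computation in local charts — in which each ruling is carried linearly and isomorphically onto its image, while a deformation of $p$ in $Z$ moves the ruling injectively and transversally to the ruling direction, because each $\nu_{b_i}$ is an embedding and each $\pi_i$ an isomorphism onto $Z_i$ — shows that $\Gamma_Z\to C(Z;\bfb)$ is an immersion, hence an isomorphism; so $C(Z;\bfb)$ is smooth. (For $n=1$ this reduces to the isomorphism $C(Z;b_1)\cong Z$ noted above.)

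The implication $(2)\Rightarrow(3)$ is the heart of the matter. Assume $C(Z;\bfb)$ is smooth (so, in particular, reduced) with all $b_i\ge2$; since $Z$ is irreducible we may take $C(Z;\bfb)$ irreducible, and one checks (multiplying a hypothetical nilpotent of $A/I$ into a suitable multidegree) that the reducedness of $C(Z;\bfb)$ forces $Z$ to be reduced. Fix an index $i$ and a point $q\in Z_i$, put $x=\nu_{b_i}(q)$ and $W=\pi_i^{-1}(q)\subseteq Z$. By Lemma~\ref{multihomogeneous}$(iii)$ the intersection $C(Z;\bfb)\cap\P(\Sym^{b_i}V_i)$ equals $\nu_{b_i}(Z_i)$, so $\nu_{b_i}(Z_i)\subseteq C(Z;\bfb)$, and every ruling $L_{p'}$ with $p'\in W$ passes through $x$. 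Passing to affine cones, the cone over $\nu_{b_i}(Z_i)$ lies in the coordinate block $\Sym^{b_i}V_i$, while the cone over each $L_{p'}$ contributes, in the remaining blocks, the line through $\nu_{b_j}(\pi_j(p'))$; these blocks being complementary, the Zariski tangent space of the cone of $C(Z;\bfb)$ at a point over $x$ contains the direct sum of the tangent space of the cone of $\nu_{b_i}(Z_i)$ and, for each $j\ne i$, the linear span of the Veronese images $\nu_{b_j}(\pi_j(p'))$, $p'\in W$. Estimating dimensions — the first summand has dimension $\ge\dim Z_i+1$, and the $j$-th has dimension $\ge\dim\pi_j(W)+1$ — and using that smoothness of $C(Z;\bfb)$ forces this tangent space to have dimension $\dim Z+n$, together with $\dim W\ge\dim Z-\dim Z_i$ (fiber dimension) and $\dim W\le\sum_{j\ne i}\dim\pi_j(W)$ (as $W$ embeds in $\prod_{j\ne i}\P^{a_j}$), all these inequalities must be equalities. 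It follows that $Z_i$ is smooth at $q$, that $\dim\pi_j(W)=0$ for $j\ne i$ so that $\dim W=0$, and that the span of $\nu_{b_j}(\pi_j(W))$ is a single point for each $j\ne i$; since the Veronese image of a positive-dimensional subvariety of $\P^{a_j}$ spans more than a point — indeed, for $b_j\ge2$ it is never a linear subspace — this forces $\pi_j(W)$ to be a single point for every $j$, so $W$ is a single point of $Z$. Letting $q$ and $i$ vary, each $Z_i$ is smooth, $\dim Z_i=\dim Z$, and each $\pi_i\colon Z\to Z_i$ is a finite bijective morphism onto a smooth (hence normal) variety of the same dimension, so it is an isomorphism; this is $(3)$.

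The step I expect to be the main obstacle — and the one in which the hypothesis $b_i\ge2$ is used essentially — is this tangent-space estimate in $(2)\Rightarrow(3)$: one must assemble \emph{enough} independent tangent directions at $x=\nu_{b_i}(q)$, drawn both from the subvariety $\nu_{b_i}(Z_i)$ and from \emph{all} the rulings through $x$, and control the linear span of the Veronese images of $\pi_j(\pi_i^{-1}(q))$ precisely enough — using that for $b_j\ge2$ the Veronese sends a positive-dimensional variety to a nondegenerate variety of strictly larger dimension — to contradict the expected dimension of the tangent cone unless each $\pi_i$ is injective and each $Z_i$ is smooth. When some $b_i=1$ this control is lost: $C(\P^1\times\P^1;(1,1))=\P^3$ is smooth although the two projections of $\P^1\times\P^1$ onto $\P^1$ are not injective, which is exactly why condition $(2)$ is stated with all $b_i\ge2$.
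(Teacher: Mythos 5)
Your $(2)\Rightarrow(3)$ follows the same strategy as the paper — a tangent-space dimension count at a point of the sub-Veronese $\nu_{b_i}(Z_i)\subseteq C(Z;\bfb)$, exploiting the block decomposition $\P^N=\P\bigl(\oplus_j\Sym^{b_j}V_j\bigr)$ and the fact that $\nu_{b_j}(\P^{a_j})$ contains no positive-dimensional linear space when $b_j\geq 2$ — but it has a genuine gap at the very end. Your count only uses the \emph{closed points} $p'\in W=\pi_i^{-1}(q)$, so it only proves that each $\pi_i$ is injective on closed points, and you then invoke ``finite bijective onto a smooth, hence normal, variety of the same dimension $\Rightarrow$ isomorphism.'' That implication is false in positive characteristic (the theorem is stated over an arbitrary field $\Bbbk$): relative Frobenius $\P^1\to\P^1$ is finite and bijective but not an isomorphism. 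Concretely, if $Z\subset\P^1\times\P^1$ is the graph of Frobenius, then $\pi_2$ is bijective but not an isomorphism, the scheme-theoretic fiber $W$ is a length-$p$ non-reduced point, and your span $\overline{\{\nu_{b_1}(\pi_1(p')):p'\in W\}}$ is a single point, so your chain of inequalities closes up without detecting the singularity of $C(Z;\bfb)$. What is missing is \emph{unramifiedness}: one must control the scheme structure of the fiber, not just its support. The paper does this by placing the entire cone $\mathcal{T}_1$ over $C(F;\bfb')$ (with $F$ the scheme-theoretic fiber and $\bfb'=(b_j)_{j\neq i}$) inside the tangent space; the equality case of the dimension count forces $\mathcal{T}_1$ to be a linear space, hence $C(F;\bfb')$ is a reduced linear space, hence $F$ is a single \emph{reduced} point. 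Then $\pi_i$ is finite, injective and unramified, i.e.\ a closed immersion onto $Z_i$, hence an isomorphism, with no appeal to normality or to the characteristic. The point you need to add is that the span of $\nu_{b_j}(\pi_j(F))$ for a non-reduced $F$ is positive-dimensional — that is exactly the extra tangent direction your argument fails to see.

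For $(3)\Rightarrow(1)$ you take a genuinely different route from the paper: the paper inducts on $n$, using Lemma~\ref{multihomogeneous} and projection from $\P(\Sym^{b_1}V_1)$ to show the fibers of $\Gamma\to C(Z;\bfb)$ are reduced points, whereas you argue directly that the rulings are pairwise disjoint (via the block decomposition) and that $\Gamma_Z\to\P^N$ is an immersion. This is sound and arguably more transparent, but the ``Jacobian computation'' carries all the weight and is only asserted. It does go through: on the affine cone, a kernel vector of the differential gives $\lambda_i\, d\widehat{\nu_{b_i}}\bigl(d\pi_i(v)\bigr)\in\Bbbk\cdot\widehat{\nu_{b_i}(p_i)}$ in each block, and any index with $\lambda_i\neq 0$ forces $d\pi_i(v)$ to be radial, hence $v=0$ since $\pi_i$ is an immersion of $Z$; you should write this out, as it is the entire content of that implication.
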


\begin{proof} 
Let $L\subset \P^{N}$ be the subspace $\P^{{a_1+b_1 \choose a_1}-1}= \P(\Sym^{b_{1}}(V_{1}))\subset \P^N$,
and let $\P^{N'} = \P(\oplus_{i=2}^{n}\Sym_{b_{i}}V_{i}\subset \P^{N})$ be the complementary subspace.

{\bf 3) $\Rightarrow$ 1)}
Suppose that $Z$ is nonsingular and that the projections $\pi_i: Z\to \P^{a_i}$ are 
 isomorphisms onto their images $Z_i$. 
 Since the components of $Z$ satisfy the same conditions as $Z$, we may assume that $Z$ is reduced and irreducible, and it follows that
 $C(Z;\bfb)$ is, too. 

As in Section~\ref{SectionBundles}, set
$$
\Gamma := \left\{(p, x)\in Z \times C(Z;\bfb)\,\big|\, x \in 
\overline{\nu_{b_1}\pi_1(p),\dots,\nu_{b_n}\pi_n(p)}\right\}.
$$
Each fiber of the projection to the first factor is isomorphic to $\P^{n-1}$. 
Since $Z$ is nonsingular, $\Gamma$ is nonsingular as well.
We claim that the projection of $\Gamma$ to the second factor is an isomorphism; 
that is, the fiber $F := F_{x}\subset Z$ of $\Gamma$ over any point $x\in C(Z;\bfb)$ is a reduced point. 

We  induct on the number of factors $n$.  
 If the point $x$ lies in $L$, 
then it lies in $\nu_{b_1}\pi_1(Z)$.
It follows that $F$ is contained in the fiber of the map
$\pi_i: Z\to \pi_i(Z)$, and this map is an isomorphism by hypothesis.
Hence, the fiber of $\Gamma$ over $x$ is a reduced point. In particular, this finishes the case $n=1$.

Now suppose $x$ does not lie  in  $L$, so that in particular $n>1$. Note that the fiber $F$ may be identified with the
fiber over $x$ of the projection 
$$
\Gamma\setminus (Z\times L)  \to C(Z;\bfb)\setminus L.
$$

By our hypothesis the natural projection of $Z$ to $\P^{N'}$ is an isomorphism onto its image $Z'$. 
Let $\bfb' = (b_2,\dots, b_n)$. 
By Lemma~\ref{multihomogeneous} we have  $C(Z';\bfb') = \pi'\big(C(Z;\bfb)\big)$ where $\pi'$ denotes the projection from $L$;
note that $\pi'$ is well-defined
away from $\nu_{b_1}\pi_1(Z) \subseteq L$.
Set
$$
\Gamma' := \left\{(p, x)\in Z' \times C(Z';\bfb')\,\big|\, x \in \overline{\nu_{b_2}\pi_2(p),\dots,\nu_{b_n}\pi_n(p)}\right\}.
$$
There is a commutative diagram of  maps
$$\begin{diagram}[small]
\Gamma \setminus (Z\times L) &\rTo&\Gamma'\\
\dTo&&\dTo\\
C(Z;\bfb)\setminus L &\rTo &C(Z';\bfb').
\end{diagram}
$$
 By induction, 
  the right hand vertical map is an isomorphism, 
so the fiber over $y=\pi'(x)$ is a reduced point $(q,y)$.
Thus
 $F\subset Z$ is contained in the fiber of the projection $Z \to Z'$ over $q$. By hypothesis, the projection $Z\to Z'$
 is an isomorphism, proving that $F$ is a reduced point.

{\bf 1) $\Rightarrow$ 2)} Trivial.

{\bf 2) $\Rightarrow$ 3)}
Suppose that $C(Z;\bfb)$ is nonsingular for some $\bfb$ with   $b_i\geq 2$ for all $i$. Let $Z_i = \pi_i Z$ be the projections. In the case $n=1$
we have $Z = Z_{1}\cong C(Z; \bfb)$, so we may assume $n>1$. 

We have $C(Z;\bfb) \cap L = \nu_{b_1} C(Z_1; b_{1})\cong Z_{1}$ by Lemma \ref{multihomogeneous}.
Fix a point $p\in Z$ and let $F \subseteq \P^{a_2}\times\cdots\times\P^{a_n}$ be the subscheme such that
$
\pi_1(p) \times F = \pi_1^{-1}(\pi_1(p))  \subseteq Z.
$
It suffices to prove that $Z_{1}$ is nonsingular and that $F$ is a reduced point.
Observe that  $\dim Z\leq \dim Z_1+\dim F$.

Let $\bfb' = (b_2, \ldots, b_n)$ and consider the subscheme $C(F;\bfb')\subseteq \P^{N'}$.
Note that $C(F;\bfb')$ is contained in $C(Z;\bfb)$. Moreover,
$C(Z;\bfb)$ contains the cone $\mathcal{T}_1$ over $C(F;\bfb')$ with vertex $\nu_{b_1}\pi_1(p)$. 
Note that $\dim \mathcal{T}_1 = \dim C(F;\bfb') + 1 = \dim F + n-1 $.

The tangent cone $\mathcal{T}$  to $C(Z;\bfb)$ at the point $\nu_{b_1}\pi_1(p)$ contains the tangent cone $\mathcal{T}_2$ to $\nu_{b_1} (Z_1)$ at $\nu_{b_1}\pi_1(p)$,
which satisfies $\dim \mathcal{T}_2 \geq\dim Z_1 $ and $\mathcal{T}_2\subseteq L$. 
On the other hand,  $\mathcal{T}$ obviously  contains the cone $\mathcal{T}_1$.

From the fact that $C(Z;\bfb)$ is nonsingular we deduce that 
$$
\dim \mathcal{T}=\dim C(Z;\bfb) = \dim Z +n-1\leq \dim Z_1+\dim F +n-1.
$$
Further, $\mathcal{T}$ is a linear space so $\mathcal{T}$  contains the linear spans $\mathcal L_{1}$ and $\mathcal L_{2}$
 of $\mathcal{T}_1$ and $\mathcal{T}_2$.
Since  $\mathcal{L}_1$ and $\mathcal{L}_2$ intersect only at the point $\nu_{b_1}\pi_1(p)$, we conclude that
\begin{align*}
\dim \mathcal{T}&\geq \dim \mathcal{L}_1 +\dim \mathcal{L}_2\\ 
&\geq \dim \mathcal{T}_1 +\dim \mathcal{T}_2\\
&=  \dim F + n-1 + \dim \mathcal{T}_2\\
&\geq \dim F + n-1 + \dim Z_{1}. 
\end{align*}
Thus all the inequalities are equalities, so   $\mathcal{T}_1, \mathcal{T}_2$ are  linear spaces, and
$\dim \mathcal T_{2} = \dim Z_{1}$. Thus $Z_1$ is nonsingular and 
 $\dim F = \dim Z - \dim Z_1$.

Since the cone $\mathcal{T}_1$ over $C(F;\bfb')$ is a linear space, it follows that $C(F; \bfb')$  is a linear
space  and thus $F$ is reduced. Further the projections of $C(F;\bfb')$ into each $\P^{{a_i+b_i \choose a_i}-1}$
 are linear spaces.
By Lemma \ref{multihomogeneous} these projections lie in  $\nu_{b_i}(\P^{a_i})$.
Since  $b_i\geq 2$, the Veronese $\nu_{b_i}(\P^{a_i})$ contains no linear spaces except reduced points.
Thus $C(F; \bfb') = n-2$. However, we also have $\dim C(F; \bfb') \dim F+n-2$, and it follows that $\dim F = 0$,
completing the argument.
\end{proof}

\section{Algebraic  properties}\label{SectionAlgebraicProperties}

In order to study further algebraic properties of correspondence scrolls, we introduce a  general multigraded operation.
To  each  vector $\bfb=(b_1, \ldots, b_n)\in \N_+^n$ we associate the finite index subgroup $H^{(\bfb)} := \langle b_1 \bfe_1, \ldots, b_n \bfe_n \rangle \subseteq  \mathbb{Z}^n$.
For a $\mathbb{Z}^n$-graded vector space $V$  we define 
$$
T_\bfb(V):= \bigoplus_{\bfd \in H^{(\bfb)}} V_\bfd.
$$
Notice that $T_\bfb(\cdot)$ is an exact functor on the category of $\mathbb{Z}^n$-graded vector spaces.

Recall the notation of the paper:
 $A$ is the Cox ring of $\P^\bfa = \P^{a_1}\times \cdots \times \P^{a_n}$, $Z$ a scheme defined by a multigraded ideal $I\subseteq A$,
and  $S$ is the homogeneous coordinate ring of $\P^N$.
Then the correspondence scroll is $C(Z;\bfb)= \Proj (T_\bfb(A/I))\subseteq \P^N$, 
where $T_\bfb(A/I)$ is regarded as a standard $\Z$-graded algebra.

\begin{remark}
For multigraded ideals $J $ of a $\Z^n$-graded algebra $R$,  the association 
$$
J \mapsto T_\bfb(J) = J\cap T_\bfb(R) \subseteq T_\bfb(R)
$$ preserves several  properties and operations of ideals, e.g.  prime, primary, radical, primary decomposition, intersection, sum, colon by  forms in $T_\bfb(R)$.
\end{remark}

If $R$ is a  Gorenstein ring, then its canonical module is $\omega_R \cong R(\bfv)$ for some vector $\bfv \in \Z^n$, known as the $a$-invariant of $R$ and denoted by $a(R)$.

\begin{thm}\label{TheoremGorenstein}
Let  $A$ be the Cox ring of $\P^\bfa = \P^{a_1}\times \cdots \times \P^{a_n}$, $I\subseteq A$  a multigraded ideal,
and  $R=A/I$. 
\begin{enumerate}
\item If $R$ is Cohen-Macaulay, then $T_\bfb(R)$ is Cohen-Macaulay;

\item if $R$ is a normal domain, then $T_\bfb(R)$ is a normal domain;

\item if $R$ is Gorenstein, then  $T_\bfb(R)$ is Gorenstein if and only if  $a(R) \in H^{(\bfb)}$;

\item if $R$ is Gorenstein with $a(R) = \bf0$, then  $T_\bfb(R)$ is Gorenstein with $a(R) = \bf0$.

\end{enumerate}
\end{thm}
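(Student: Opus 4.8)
The plan is to analyze the functor $T_\bfb$ through the action of the finite abelian group $G$ dual to $\Z^n/H^{(\bfb)}$, realizing $T_\bfb(R)$ as the ring of invariants $R^G$ after a suitable base change, and to use the standard transfer results on invariant rings together with a careful bookkeeping of multigraded canonical modules.

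First I would set up the group action. Writing $\Z^n/H^{(\bfb)} \cong \prod_i \Z/b_i\Z$, let $G$ be its character group; if $\Bbbk$ contains enough roots of unity (one reduces to this case by faithfully flat base change, which affects none of the properties in the statement) then $G$ acts on the $\Z^n$-graded ring $R$ by scaling the degree-$\bfd$ piece by the character value $\chi(\bfd)$, and $T_\bfb(R) = R^G$ is exactly the subring on which $G$ acts trivially. For part (1), I would invoke that a finite group acting on a Cohen--Macaulay ring (in the non-modular case, or more robustly using that $R$ is a direct sum of $R^G$-modules indexed by the cosets and each summand is a maximal Cohen--Macaulay $R^G$-module) forces $R^G$ to be Cohen--Macaulay; alternatively, and more in the spirit of the paper, one observes that $R$ is a finite $T_\bfb(R)$-module, that $\dim R = \dim T_\bfb(R)$ by Theorem~\ref{dim and deg}(1), and that a homogeneous system of parameters for $T_\bfb(R)$ is a regular sequence on $R$ (since it is an h.s.o.p. there too and $R$ is CM), hence on the direct summand $T_\bfb(R)$. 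For part (2), normality of $R^G = T_\bfb(R)$ is immediate since a ring of invariants of a normal domain under a group action is a normal domain (integral closedness is inherited by invariants, and $T_\bfb(R)$ is a domain because it is a subring of the domain $R$).

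The heart of the argument is parts (3) and (4). Here I would use that the canonical module commutes with $T_\bfb$ in the appropriate graded sense: because $T_\bfb$ is exact and $R$ is a finite free-ish extension of $T_\bfb(R)$ (a direct sum of MCM modules over the cosets), graded local duality gives $\omega_{T_\bfb(R)} \cong T_\bfb(\omega_R)$ as $T_\bfb(R)$-modules, up to the usual degree shift coming from the difference of the ambient polynomial rings $A$ and $S$. Now $\omega_R \cong R(a(R))$ as a $\Z^n$-graded module. Applying $T_\bfb$: if $a(R) \in H^{(\bfb)}$, then $T_\bfb(R(a(R))) \cong T_\bfb(R)$ as a graded $T_\bfb(R)$-module (the shift preserves $H^{(\bfb)}$), so $\omega_{T_\bfb(R)}$ is free of rank one and $T_\bfb(R)$ is Gorenstein. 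Conversely, if $a(R) \notin H^{(\bfb)}$, I would argue that $T_\bfb(R(a(R)))$ is not a cyclic $T_\bfb(R)$-module: its minimal generators sit in degrees of the coset $a(R) + H^{(\bfb)} \ne H^{(\bfb)}$, and a rank-one torsion-free module over the domain $T_\bfb(R)$ generated outside degree zero cannot be free; more carefully, one compares the Hilbert series of $T_\bfb(\omega_R)$ and $T_\bfb(R)$ and notes they are not related by a single shift, so $\omega_{T_\bfb(R)}$ is not isomorphic to $T_\bfb(R)$ up to shift. Part (4) is then the special case $a(R) = \bf0$: since $\bf0 \in H^{(\bfb)}$ always, $T_\bfb(R)$ is Gorenstein by (3), and the isomorphism $\omega_{T_\bfb(R)} \cong T_\bfb(\omega_R) = T_\bfb(R)$ shows $a(T_\bfb(R)) = 0$ once one checks the polynomial-ring shift is absorbed — concretely, one verifies directly that the $\Z$-graded $a$-invariant of $T_\bfb(R)$, computed as $\max\{t : (\omega_{T_\bfb(R)})_{-t} \ne 0\}$ reinterpreted through the degree-$1$ generators $z_{i,\alpha}$ of $S$, comes out to $0$.

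The main obstacle I anticipate is making the isomorphism $\omega_{T_\bfb(R)} \cong T_\bfb(\omega_R)$ precise, including the grading shift, since $R$ and $T_\bfb(R)$ live over different polynomial rings ($A$ with its $\Z^n$-grading versus $S$ with its $\Z$-grading via $z_{i,\alpha} \mapsto x_i^\alpha$). I would handle this by working with the common overring: $A$ is a finite graded module over $T_\bfb(A)$, and $T_\bfb(A)$ is itself the Cox/coordinate ring of the join variety (the Segre-like subring generated by the $x_i^\alpha$ with $|\alpha| = b_i$), which is Cohen--Macaulay with explicitly known canonical module as a subring of $A$ via $T_\bfb$. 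Tracking $\omega$ through the chain $A \supseteq T_\bfb(A) \twoheadleftarrow S$ and then base-changing to $R$ via the exactness of $T_\bfb$ and the finiteness of $R$ over $T_\bfb(R)$ should pin down every shift; the bookkeeping is routine but must be done carefully to get the ``$a$-invariant $0$'' conclusion in (4) exactly right.
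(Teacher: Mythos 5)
Your proposal is correct and, at its core, coincides with the paper's argument: parts (1) and (2) come from the fact that $T_\bfb(R)$ is a direct summand of $R$ as a $T_\bfb(R)$-module (the paper cites Hochster--Eagon for exactly this), and parts (3) and (4) come from the identification $\omega_{T_\bfb(R)} \cong T_\bfb(\omega_R)$ followed by the observation that $T_\bfb(R(\bfv))$ is free cyclic precisely when $\bfv \in H^{(\bfb)}$. Two remarks on where your packaging differs. First, the invariant-theoretic framing $T_\bfb(R)=R^G$ is an unnecessary detour and is delicate in characteristic $p$ dividing some $b_i$: the honest finite group of roots of unity is then too small and $R^G$ strictly contains $T_\bfb(R)$, so one would have to pass to the group scheme $\mu_{b_1}\times\cdots\times\mu_{b_n}$; your fallback --- the coset decomposition $R=\bigoplus_{\bfd+H^{(\bfb)}} R_{\bfd+H^{(\bfb)}}$ coming purely from the grading --- is characteristic-free and is what the paper actually uses, so you should lead with that and drop the group action. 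Second, the ``bookkeeping of shifts between $A$ and $S$'' that you flag as the main obstacle dissolves if you define the canonical module intrinsically, as the paper does, via $\omega_R = {^*\Hom_\Bbbk}(H^d_{R_+}(R),\Bbbk)$: the paper proves $T_\bfb(H^i_J(M)) = H^i_{T_\bfb(J)}(T_\bfb(M))$ by computing with a \v{C}ech complex on generators whose degrees lie in $H^{(\bfb)}$ (available because every multigraded element of $R$ has a power in $T_\bfb(R)$), and then $\omega_{T_\bfb(R)} = T_\bfb(\omega_R)$ with no extrinsic shift to track. Finally, note that your justification of the ``only if'' in (3) (a coset module generated off the lattice cannot be free) is no more complete than the paper's own one-line assertion; as stated it is genuinely delicate --- for a one-dimensional example such as $R=\Bbbk[x]$ the odd-degree coset module \emph{is} free --- so if you want a watertight converse you should compare Hilbert functions of $T_\bfb(R(\bfv))$ against all shifts of $T_\bfb(R)$ in the specific situation at hand rather than rely on the general slogan.
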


\begin{proof}
For simplicity, denote $R' = T_\bfb(R)$. 
The ring extension  $R' \subseteq R$  is homogeneous with respect to the $\Z^n$ grading, and furthermore it is integral 
since for  each multigraded element $f\in R$ there exists a power lying in $R'$.
Denote their Krull dimension by   $d=\dim R = \dim R'$.

(1) and (2) follow from the fact that $R'$ is a direct summand of $R$ as $R'$-module,
see for instance \cite{HoEa}.

(3) 
We begin by showing that $T_\bfb(\cdot)$ behaves well with respect to local cohomology. 
In general, if $M$ is a graded $R$-module, then $T_\bfb(M)$ is a graded $R'$-module.
However, the functor  $T_\bfb(\cdot)$ does not preserve injectives, so we use the \v{C}ech complex.

If $f \in R$ is  multigraded with $\deg(f) \in H^{(\bfb)}$, then we have $T_\bfb(M_f) = (T_\bfb(M))_f$.
For the \v{C}ech complex of a graded sequence $\mathfrak{f}= f_1, \ldots, f_s$  with degrees in $H^{(\bfb)}$
 we obtain 
\begin{equation}\label{Cech}
 T_\bfb( C^\bullet (\mathfrak{f}; M)) =  C^\bullet (\mathfrak{f}; T_\bfb(M)).
\end{equation}

Let $J \subseteq R$ be a multigraded ideal,
then $J' = T_\bfb(J) $ is a multigraded ideal of $R'$. 
Since every multigraded  element  $f \in R$ has a power in $R'$,
 we may choose, up to radical, generators $\mathfrak{f}$ of $J$ with degrees in $H^{(\bfb)}$.  
From \eqref{Cech} and the fact that $T_\bfb(\cdot)$ is an exact functor, we conclude that local cohomology and $T_\bfb(\cdot)$ commute: for any $J,M,i$ we have
\begin{equation}\label{LocalCohomology}
 T_\bfb( H^i_J ( M)) =  H^i_{J'} (T_\bfb(M)).
\end{equation}

Since $R$ is Cohen-Macaulay, $R'$ is also Cohen-Macaulay by (1). Their graded canonical modules are 
 $$
 \omega_R = {^*\Hom_\Bbbk}(H^d_{R_{+}}(R), \Bbbk), \qquad \omega_{R'} = {^*\Hom_\Bbbk}(H^d_{{R'}_+}(R'), \Bbbk).
 $$
 where  $R_+, R'_+$ denote the respective homogeneous maximal ideals and  $^*\Hom_\Bbbk(\cdot, \Bbbk)$ is the Hom functor in the category of $\Z^n$-graded modules (see \cite{GW}).
This functor is exact and commutes with $T_\bfb(\cdot)$.
From \eqref{LocalCohomology} we conclude that $\omega_{R'}= T_\bfb(\omega_R)$.

Since  $R$ is a Gorenstein ring, then $\omega_R \cong R(\bfa)$  where $\bfa= a(R) \in \Z^n$ is the  $a$-invariant.
For a cyclic free module $R(\bfv)$,  we have that $T_\bfb(R(\bfv))$ is a free $R'$-module if and only if $\bfv \in H^{(\bfb)}$, in which case it is also cyclic.
We conclude that $R'$ is Gorenstein if and only if $a(R) \in H^{(\bfb)}$.

(4) If $R$ is Gorenstein with $a(R)=\bf0$ then we have the  graded isomorphism   $\omega_R\cong R$. 
Since  $\omega_{R'}= T_\bfb(\omega_R)$, we deduce that $\omega_{R'} \cong R'$, so  $R'$ is also Gorenstein with $a(R)=\bf0$.
\end{proof}

Notable examples include Gorenstein correspondence scrolls obtained from  complete intersections in $\P^{a_1}\times \cdots \times \P^{a_n}$, in particular from divisors.

\begin{cor}\label{CorollaryCompleteIntersection}
Let $Z \subseteq  \P^{a_1}\times \cdots \times \P^{a_n}$ be a complete intersection defined by multigraded forms  $f_1, \ldots, f_c$.
If there are $\lambda_i\in \Z$ such that
$$\deg(f_1) + \cdots + \deg(f_c) = (a_1 + 1+ \lambda_1 b_1,  \ldots, a_n + 1+ \lambda_n b_n)
$$
then  $C(Z;\bfb)$ is a Gorenstein projective scheme, and if $\lambda_i =0 $ for all $i $  then it is  Calabi-Yau.
\end{cor}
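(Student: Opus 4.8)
The plan is to derive Corollary~\ref{CorollaryCompleteIntersection} directly from Theorem~\ref{TheoremGorenstein} by computing the $a$-invariant of the coordinate ring of a multigraded complete intersection. First I would set $R = A/(f_1,\dots,f_c)$ where $A$ is the Cox ring of $\P^\bfa$; since $Z$ is a complete intersection in $\P^\bfa$, the forms $f_1,\dots,f_c$ form a regular sequence in $A$ after saturation (or one works with the section ring, being careful that the hypothesis is about $Z$ as a subscheme of the product of projective spaces). In any case $R$ is Cohen-Macaulay and Gorenstein, so Theorem~\ref{TheoremGorenstein}(1) gives that $T_\bfb(R)$ is Cohen-Macaulay, and Theorem~\ref{TheoremGorenstein}(3) tells us $C(Z;\bfb) = \Proj(T_\bfb(R))$ has Gorenstein coordinate ring precisely when $a(R) \in H^{(\bfb)} = \langle b_1\bfe_1,\dots,b_n\bfe_n\rangle$.

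The key computation is that the $\Z^n$-graded $a$-invariant of the coordinate ring of a multigraded complete intersection of forms of degrees $\deg(f_1),\dots,\deg(f_c)$ in $A = \Bbbk[x_{i,j}]$ is
$$
a(R) = \Big(\sum_{k=1}^c \deg(f_k)\Big) - \big(a_1+1,\dots,a_n+1\big).
$$
This follows from the standard fact that the canonical module of $A$ (a polynomial ring with $a_i+1$ variables of degree $\bfe_i$) is $A(-(a_1+1,\dots,a_n+1))$, combined with the behavior of the canonical module under quotient by a regular sequence: $\omega_R \cong \omega_A\big(\sum_k \deg(f_k)\big) \cong R\big(\sum_k\deg(f_k) - (a_1+1,\dots,a_n+1)\big)$, using the multigraded Koszul resolution of $R$ over $A$. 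Plugging in the hypothesis $\sum_k\deg(f_k) = (a_1+1+\lambda_1 b_1,\dots,a_n+1+\lambda_n b_n)$ gives $a(R) = (\lambda_1 b_1,\dots,\lambda_n b_n) \in H^{(\bfb)}$, so Theorem~\ref{TheoremGorenstein}(3) applies and $C(Z;\bfb)$ is Gorenstein.

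For the Calabi-Yau conclusion, observe that when all $\lambda_i = 0$ we get $a(R) = \bf0$, so Theorem~\ref{TheoremGorenstein}(4) gives that $T_\bfb(R)$ is Gorenstein with $a$-invariant $\bf0$ (now as a standard $\Z$-graded ring, since the single $\Z$-grading on $T_\bfb(R)$ is inherited from the total degree and the vanishing of the multigraded $a$-invariant forces the $\Z$-graded one to vanish too). By the definition of Calabi-Yau given in the paper --- an arithmetically Cohen-Macaulay projective scheme whose coordinate ring is Gorenstein of $a$-invariant $0$ --- this is exactly the assertion that $C(Z;\bfb)$ is Calabi-Yau.

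The main obstacle, and the only genuinely delicate point, is the bookkeeping around saturation and the distinction between $Z$ as a subscheme of $\prod\P^{a_i}$ versus the ideal $I\subseteq A$ used throughout the rest of the paper: one must ensure that ``complete intersection in $\P^\bfa$'' yields a Cohen-Macaulay Gorenstein ring $R=A/I$ to which Theorem~\ref{TheoremGorenstein} applies, with the stated multigraded $a$-invariant. If $f_1,\dots,f_c$ already form a regular sequence in $A$ (equivalently $\codim = c$ in $A$, not merely in $\P^\bfa$), this is immediate from the Koszul computation; otherwise one should restrict attention to the saturated situation or note that $T_\bfb$ commutes with saturation (as in the Remark preceding Theorem~\ref{TheoremGorenstein}), so the $a$-invariant is unaffected. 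Everything else is a formal consequence of Theorem~\ref{TheoremGorenstein} together with the elementary multigraded adjunction formula for complete intersections.
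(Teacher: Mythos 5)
Your proposal is correct and follows essentially the same route as the paper: compute the multigraded $a$-invariant of the complete intersection via the Koszul resolution (adjunction), observe that the hypothesis places it in $H^{(\bfb)}$, and invoke Theorem~\ref{TheoremGorenstein}(3) and (4). Your extra care about saturation and the sign convention for $a(R)$ (your $\sum_k\deg(f_k)-(a_1+1,\dots,a_n+1)$ is the negative of the paper's expression, but $H^{(\bfb)}$ is a subgroup so the conclusion is unaffected) only makes the argument more complete than the paper's two-line version.
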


\begin{proof}
The $a$-invariant  of $\P^\bfa$ is $(a_1 + 1,  \ldots, a_n +1)$.
After going modulo the regular sequence $f_1, \ldots, f_c$ we obtain a complete intersection  with $a$-invariant equal to
$(a_1 + 1,  \ldots, a_n +1)-\deg(f_1) - \cdots - \deg(f_c)$.
Both statements follow directly  from the previous theorem.
\end{proof}

Next, using the work of Shibuta \cite{Sh}, we obtain information on   Gr\"obner bases of correspondence scrolls.
We say that a Gr\"obner basis is squarefree if its elements have squarefree leading monomials.

\begin{prop}\label{PropositionGrobner}
Let $I \subseteq A$ be a multigraded ideal,  $Z \subseteq \prod_{i=1}^n\A^{a_i+1}$  the scheme defined by $I$, 
and $\delta\geq 2 $  an integer.
If $I$ has a Gr\"obner basis  of forms of degree at most $\delta$, then $C(Z;\bfb)$  has a Gr\"obner basis  of forms of degree at most $\delta$ for every $\bfb$.
Moreover, if the Gr\"obner basis of $I$ is squarefree, then the Gr\"obner basis of  $C(Z;\bfb)$ will also be squarefree.
\end{prop}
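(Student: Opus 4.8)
The plan is to reduce the statement about $C(Z;\bfb)$ to Shibuta's theorem on Veronese-type subalgebras, using the $T_\bfb$ formalism of Section~\ref{SectionAlgebraicProperties}. The homogeneous coordinate ring of $C(Z;\bfb)$ is $T_\bfb(A/I)$, presented via $S=\Bbbk[z_{i,\alpha}]\twoheadrightarrow T_\bfb(A/I)$. The defining ideal of $C(Z;\bfb)$ splits as $J+\sum_i I_i$ by Proposition~\ref{generators}, where the $I_i$ are the ideals of the Veronese embeddings $\nu_{b_i}$ and $J$ is generated by the pullbacks of $g_j\cdot A_{\bfc_j}$ for a chosen generating set $g_1,\dots,g_s$ of $I$. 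The first step is therefore to recall precisely what Shibuta proves: for the Veronese subring of a polynomial ring (or, in the multigraded setting, of $A$ with respect to the subgroup $H^{(\bfb)}$), there is a term order on $S$ under which the $I_i$ have a squarefree Gröbner basis of quadrics, and under which the natural map $S\to T_\bfb(A)$ has the property that a Gröbner basis of a multigraded ideal of $A$ of degree $\le\delta$ pulls back to a Gröbner basis of its preimage in $S$ of degree $\le\delta$, squarefree if the original one is. One must quote the exact form of this; the construction of the term order (a lexicographic or "sortable" order adapted to the semigroup generated by the monomials $x_i^\alpha$, $|\alpha|=b_i$) is the technical heart imported from \cite{Sh}.

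Granting that, the argument proceeds as follows. First, fix a term order $<_A$ on $A$ for which $I$ has the given (squarefree) Gröbner basis of degree $\le\delta$. Second, use Shibuta's construction to produce a compatible term order $<_S$ on $S$: the key compatibility is that the leading term map should intertwine the substitution $z_{i,\alpha}\mapsto x_i^\alpha$ with $<_A$ on the subalgebra $T_\bfb(A)\subseteq A$, and should restrict on each block of $z$-variables to an order for which the Veronese ideal $I_i$ has its standard squarefree quadratic Gröbner basis. Third, show that the union of (a) Shibuta's Gröbner bases of the $I_i$ and (b) the lifts to $S$ of the $g_j$ (more precisely of the forms $g_j\cdot A_{\bfc_j}$, rewritten in the $z_{i,\alpha}$ modulo the $I_i$, exactly as in the rewriting done in Examples~\ref{scroll from cplx}) together form a Gröbner basis of $J+\sum_i I_i$. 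Since each $g_j$ has degree $\le\delta$ in the $x_{i,j}$ and its multidegree components are pushed up to the nearest multiple of $b_i$, the rewritten form has $z$-degree $\le\delta$ (this is where $\delta\ge 2$ is used, so that the quadratic Veronese relations do not exceed the degree bound); and squarefree leading terms are preserved by the rewriting because one may choose, among the monomials of $x$-degree $(d_{j,i}+c_{j,i})$ dividing into $b_i$-blocks, a factorization whose image is a squarefree $z$-monomial.

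The main obstacle is establishing the Buchberger-type S-pair vanishing for the combined generating set — in particular, S-pairs between a lifted generator of $J$ and a Veronese relation in some $I_i$. The mechanism should be: modulo $\sum_i I_i$ the ring $S$ maps onto $T_\bfb(A)$, whose initial algebra (for the induced order) is the semigroup ring on the monomials $x_i^\alpha$; an S-polynomial reduces to something supported in $T_\bfb(A)$, where it is controlled by the Gröbner basis of $I$ inside $A$ via $T_\bfb$-exactness and the fact (Remark after the definition of $T_\bfb$, together with Proposition~\ref{generators}) that $T_\bfb$ commutes with sums, intersections, and colons by forms of degree in $H^{(\bfb)}$. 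Concretely I would argue that $\initial_{<_S}(J+\sum I_i) = \initial_{<_S}(\sum I_i) + (\text{lift of } \initial_{<_A}(I))$, the first summand handled by Shibuta and the second by pulling back the leading-term computation along the isomorphism $S/\sum I_i \cong T_\bfb(A)$ and $\initial$-compatibility; a dimension/Hilbert-function count using Theorem~\ref{dim and deg}(2) then forces equality rather than mere containment. Once the initial ideal is pinned down, the degree bound and the squarefree property are read off directly from the generators exhibited, and the proposition follows.
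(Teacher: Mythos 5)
Your proposal follows essentially the same route as the paper: the paper's proof also identifies the initial ideal of $C(Z;\bfb)$ as the sum of a squarefree quadratic initial ideal of the Veronese toric part (via \cite[Theorem 14.2]{Stu}) and the monomial ideal $H$ of all monomials of $S$ mapping into $\initial(I)$, with the degree and squarefreeness transfer given by \cite[Lemma 2.6]{Sh}. The only difference is that the paper simply cites \cite[Theorem 2.9]{Sh} for the existence of a term order on $S$ realizing $J+H$ as an initial ideal, whereas you sketch re-deriving that step (compatible term order plus a Hilbert-function comparison); your sketch is the right mechanism and is essentially the content of the cited theorem.
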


\begin{proof}
Let  $\initial(I)\subseteq A$ be a monomial  initial ideal of $I$, with respect to a term order or integral weight. 
Let $H\subseteq S$ be the  ideal generated by all monomials whose image  via the map  $S\rightarrow A$ 
of Definition \ref{DefinitionCorrespondenceScroll} lies in   $\initial(I)$.
If $\initial(I)$ has generators of degree at most $\delta$, respectively, is a squarefree monomial ideal, then the same is true for $H$, cf. \cite[Lemma 2.6]{Sh}.

Since $T_\bfb(A)$ is the tensor product of the coordinate rings of $\nu_{b_i}(\P^{a_i})\subseteq \P^{{a_i +b_i\choose a_i}-1} $ over $\Bbbk$, 
a Gr\"obner basis for the kernel of  $ S \rightarrow A $ may be obtained as the union of Gr\"obner bases of each factor.
It follows  by \cite[Theorem 14.2]{Stu} that this kernel admits a  squarefree quadratic initial ideal $J\subseteq S$.

Finally,  there exists a term order on $S$ such that  $J+H $ is an initial ideal of $C(Z;\bfb)$, cf.
  \cite[Theorem 2.9]{Sh},
  and the desired statements follow.
\end{proof}

A special case of  Proposition \ref{PropositionGrobner} states that if $Z$ is defined by a Gr\"obner basis of quadrics, then $C(Z;\bfb)$ is also defined by a Gr\"obner basis of quadrics for all $\bfb$.
Blum \cite[Theorem 2.1]{Bl} proves a closely related statement:
he shows -- using the terminology of ``generalized Veronese subrings''-- that if $R$ is a Koszul algebra, then $T_\bfb(R)$ is also a Koszul algebra for all $\bfb$.

\section{Examples of correspondence scrolls}\label{SectionExamples}

In this section we list more examples of correspondence scrolls, in order to   show their ubiquity.
In some cases, our analysis offers an alternative point of view or a quicker proof of some results in the literature.

\begin{example}[Join variety]
Let  $X_1, \ldots, X_n$ be closed subschemes of general linear subspaces $\P^{a_1}, \ldots, \P^{a_n} $ in $ \P^N$.
If $Z = X_1 \times \cdots \times X_n$
then  $C(Z;\bf1)$ is the join variety of the $X_i$'s in $\P^N$.
Its degree is the product $\deg(X_1) \deg(X_2) \cdots \deg(X_n)$.
 \end{example}

The next two examples are  different generalizations of 2-dimensional rational normal scrolls.

\begin{example}[Determinantal ideals of square matrices]
Fix integers $2\leq r \leq n $.
Let $Z  \subseteq (\P^{n-1})^{n}$ be the subscheme defined by the $r$-minors of the  $n\times n$ generic matrix.
The correspondence scroll $C(Z;\bfb)$ is an irreducible  scheme in $\P^N$ of dimension $n^2-(n-r+1)^2-1$. 

By Theorem \ref{TheoremGorenstein} $C(Z;\bfb)$ is a Gorenstein scheme when $b_i $ divides $r-1$ for each $i$, 
since $Z$ is a Gorenstein scheme with $a$-invariant  $(1-r, \ldots, 1-r) \in \N^{n}$ (see \cite{BrHe}).
The $r$-minors form a squarefree  Gr\"obner basis, 
so by Proposition  \ref{PropositionGrobner}  $C(Z;\bfb)$ is  defined by a  squarefree Gr\"obner basis of multilinear forms for every $\bfb$.

When $n=r=2$ we obtain precisely the scrolls $\S(b_1,b_2)$.
As it is well-known,  this scroll is always Koszul, and the only such Gorenstein scroll is $\S(1,1)$;
however, for $r>2$ we have several examples of Gorenstein correspondence scrolls.
\end{example}

\begin{example}[Adjacent minors]
Fix integers $1 \leq m \leq n \in \N$. 
Let $Z\subseteq (\P^{m-1})^n$ be the subscheme defined by the  adjacent $m$-minors of the $m \times n$ generic matrix (see \cite{HoSu}).
These minors form a squarefree Gr\"obner basis and a regular sequence.

The scheme $C(Z;\bfb)\subseteq \P^N$  is  reduced   of dimension $mn-n+m-2$ .
By Proposition \ref{PropositionGrobner}, $C(Z;\bfb)$ is defined by a  sqaurefree Gr\"obner basis of  multilinear forms for every $\bfb$.
By Theorem \ref{CorollaryCompleteIntersection} the scheme $C(Z;\bfb)$  is Gorenstein if  $\bfb$ divides componentwise the vector
$(1,2, \ldots, m-1, m, m \ldots, m, m-1, \ldots, 2, 1) \in \N^n.$

In the case when $m=2$, the components of the subscheme $Z\subseteq (\P^{1})^n$ are described in  \cite{DES}. 
It follows that the number of components of the  correspondence scroll $C(Z;\bfb)$ is the Fibonacci number  $F_{n-1}$ (where $F_0=F_1=1$).
Each component is a join of  rational normal scrolls. 
In this case, $C(Z;\bfb)$ is  Koszul, and 
choosing $m=n=2$ gives rise to the scrolls $\S(b_1,b_2)$.
\end{example}

In the next examples we discuss generalizations of rational normal scrolls of all dimensions.
In preparation, we  compute the top Chern class of the small diagonal in the $n$-fold product of a projective space.

\begin{prop}\label{PropositionChern2Minors}
The 
Chern class of the small diagonal $\Delta \subseteq (\P^a)^n$  is
$$
c(\Delta) = \sum_{\alpha}\prod_{i=1}^n \zeta_i^{\alpha_i}
$$
where the sum ranges over all $\alpha\in \Z^n$ with $0 \leq \alpha_i \leq a$
and $\sum_i \alpha_i = a(n-1)$.
\end{prop}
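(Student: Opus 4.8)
The plan is to compute the class of the small diagonal $\Delta \subseteq (\P^a)^n$ by realizing $\Delta$ as an iterated intersection and pushing the computation into the Chow ring presented above. First I would note that $\Delta$ has codimension $a(n-1)$ in $(\P^a)^n$, which matches the total degree $a(n-1)$ of the monomials $\prod_i \zeta_i^{\alpha_i}$ appearing in the claimed formula, so the statement is at least consistent dimension-wise. The cleanest route is to describe $\Delta$ set-theoretically and scheme-theoretically by the $2$-minors of the $(a+1)\times n$ generic matrix $(x_{i,j})$, i.e. by the vanishing of the $2\times 2$ minors formed from any two columns; equivalently $\Delta$ is the locus where all $n$ points of $\P^a$ coincide. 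I would then present $\Delta$ as the intersection of the $n-1$ "pairwise diagonals" $\Delta_{1,k} = \{p_1 = p_k\}$ for $k = 2,\dots,n$, each of which is a copy of the diagonal in the product of two projective spaces.

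The key computational step is: the class of the pairwise diagonal $\Delta_{1,k} \subseteq (\P^a)^n$ is $\sum_{j=0}^{a} \zeta_1^{j}\zeta_k^{a-j}$. This is the standard formula for the class of the diagonal in $\P^a \times \P^a$ (the "diagonal as the zero locus of the tautological section", or equivalently from the Künneth decomposition of the class of the diagonal), pulled back along the projection $(\P^a)^n \to \P^a \times \P^a$ onto the $1$st and $k$th factors. Then I would multiply these $n-1$ classes together in the Chow ring $\Z[\zeta_1,\dots,\zeta_n]/(\zeta_1^{a+1},\dots,\zeta_n^{a+1})$:
$$
c(\Delta) = \prod_{k=2}^{n}\left(\sum_{j=0}^{a}\zeta_1^{j}\zeta_k^{a-j}\right).
$$
Expanding, a monomial in the product is obtained by choosing, for each $k$, an exponent $j_k$ with $0 \le j_k \le a$ contributing $\zeta_1^{j_k}\zeta_k^{a-j_k}$; the total $\zeta_k$-exponent for $k\ge 2$ is $\alpha_k = a - j_k$, which ranges over $0 \le \alpha_k \le a$, and the $\zeta_1$-exponent is $\alpha_1 = \sum_{k\ge 2} j_k = \sum_{k\ge 2}(a-\alpha_k) = a(n-1) - \sum_{k\ge 2}\alpha_k$. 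One checks $\alpha_1$ automatically satisfies $0 \le \alpha_1 \le a$ precisely when the monomial survives (terms with $\alpha_1 > a$ die in the quotient ring), and that $\sum_{i=1}^n \alpha_i = a(n-1)$; moreover each surviving monomial arises exactly once. This matches the claimed formula.

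The main obstacle is justifying that the product of the classes $[\Delta_{1,k}]$ really equals $[\Delta]$ — that is, that the intersection $\bigcap_{k=2}^n \Delta_{1,k}$ is proper (the codimensions add up: $(n-1)\cdot a = a(n-1) = \operatorname{codim}\Delta$) and that the intersection is generically transverse / the scheme-theoretic intersection is reduced along $\Delta$, so there is no excess-intersection correction. I would handle this by a local computation: away from a codimension-$\ge 2$ locus, in affine coordinates centered so that $p_1$ is a fixed point, $\Delta_{1,k}$ is cut out by the $a$ equations expressing "$p_k = p_1$", and the union over $k = 2,\dots,n$ gives $a(n-1)$ equations whose Jacobian has the expected rank, so the intersection is transverse at a general point of $\Delta$ and $\Delta$ is reduced. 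Since the Chow ring of a product of projective spaces is torsion-free and $\Delta$ is irreducible of the right dimension, this suffices to conclude $[\Delta] = \prod_k [\Delta_{1,k}]$. (Alternatively, one can invoke the description of $\Delta$ via the $2$-minors of the generic matrix and a known Gröbner-degeneration argument, but the transversality argument is more self-contained.)
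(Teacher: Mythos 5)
Your proof is correct, but it follows a genuinely different route from the paper's. The paper computes each coefficient $r_\alpha$ of $\prod_i\zeta_i^{\alpha_i}$ directly against the dual monomial basis: it intersects $\Delta$ with a general product $L=\prod_i L_i$ of linear subspaces $L_i=\P^{\alpha_i}\subseteq\P^a$, observes that $L\cap\Delta=\{(p,\dots,p): p\in\bigcap_i L_i\}$ is a single reduced point whenever $\sum_i\alpha_i=a(n-1)$, and concludes that every coefficient is $1$ in two lines. You instead factor $\Delta$ as the transverse intersection $\bigcap_{k=2}^n\Delta_{1,k}$ of pairwise diagonals, import the K\"unneth formula $[\Delta_{1,k}]=\sum_{j=0}^a\zeta_1^j\zeta_k^{a-j}$, and expand the product in the quotient ring; the bookkeeping of which monomials survive modulo $\zeta_1^{a+1}$ and the multiplicity-one count are both handled correctly, and your transversality check (tangent spaces of the $\Delta_{1,k}$ at a point of $\Delta$ meeting in the $a$-dimensional diagonal direction, so the intersection is dimensionally and generically transverse with no excess component) is exactly what is needed to justify $[\Delta]=\prod_k[\Delta_{1,k}]$. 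The trade-off: your argument leans on the standard diagonal class in $\P^a\times\P^a$ and a transversality verification, and as a bonus exhibits the class as an explicit product; the paper's argument is more elementary and self-contained, needing only that general linear subspaces of $\P^a$ with dimensions summing to $a(n-1)$ meet in one reduced point, and it generalizes immediately to extracting coefficients of classes of other subvarieties of multiprojective space.
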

\begin{proof}
In order to compute the coefficient of a monomial $\prod_{i=1}^n \zeta_i^{\alpha_i}$ with   $\sum_i \alpha_i =\codim (\Delta)= a(n-1)$,
we intersect $\Delta $ with a  product $L=\prod_{i=1}^n L_i$ where $L_i = \P^{\alpha_i} \subseteq \P^a$ is a general linear space.
However, this intersection is just one point, since
$$
 L \cap \Delta = \big\{ (p_1, \ldots, p_n) \, : p_i \in L_i \mbox{ and } p_1 = \cdots = p_n\big \} =
\big\{ (p, \ldots, p) \, : p \in \bigcap_{i=1}^n L_i\big\}
$$
and thus the coefficient is 1.
\end{proof}

\begin{example}[Diagonal]
Let $\Delta \subseteq (\P^a)^n$ be the small diagonal.
The correspondence scroll $C(\Delta;\bfb)$  is  an  irreducible  subscheme in $\P^N$ of dimension $a+n-1$, 
and it is nonsingular 
by Theorem \ref{TheoremNonSingular}.
By Theorem \ref{dim and deg} and Proposition \ref{PropositionChern2Minors},
the degree of $C(\Delta;\bfb)$ 
is   $\sum_{\alpha}\prod_{i=1}^n b_i^{\alpha_i}$
with sum ranging over all  $\alpha$
such that $\sum_i \alpha_i = \dim(\Delta) = a$.

Like the  rational normal scrolls,  $C(\Delta;\bfb)$ is arithmetically  Cohen-Macaulay and projectively normal by Theorem \ref{TheoremGorenstein}.
Since the $2$-minors of the $(a+1)\times n$ generic matrix form a squarefree Gr\"obner basis, $C(\Delta;\bfb)$ is defined by a squarefree Gr\"obner basis of quadrics by Proposition \ref{PropositionGrobner}.
The scrolls $\S(b_1,\ldots, b_n)$ are obtained when $a=1$.
\end{example}

\begin{example}[Closures of linear spaces]
A  generalization of the previous example, and hence of rational normal  scrolls, is obtained by considering an arbitrary  linear subspace 
$L \subseteq \A^m$ and its closure   $\widetilde{L} \subseteq \P^{a_1}\times \cdots \times \P^{a_n}$ where $\sum a_i = m$.
These are special  examples of Cartwright-Sturmfels ideals,  a large class of multigraded ideal with strong properties (see  \cite{CDG17}). 
For any $L$ and $\bfb$, the scheme $C\big(\widetilde{L};\bfb\big)\subseteq \P^N$  is  irreducible, arithmetically Cohen-Macaulay, and projectively normal by Theorem \ref{TheoremGorenstein} and \cite[Theorem 3.1]{CDG17}.
\end{example}

Non-trivial varieties of minimal degree, i.e.  rational normal scrolls and cones over the Veronese surface, are examples of correspondence scrolls.
One can also use the construction to produce some reducible schemes of minimal degree (see \cite{EGHP}).

\begin{example}[Small schemes]
Let $n_1, n_2 \in \N$ and consider the product $\P^\bfa =(\P^1)^{n_1} \times  (\P^1)^{n_2}$.
Let $\Delta_i \subseteq (\P^1)^{n_i}$ be the diagonal for $i =1,2$, and choose a point $p_i \in \Delta_i$.
Let $Z_1 = \Delta_1 \times \{p_2\}, Z_2 = \{p_1\} \times \Delta_2$ and consider $Z = Z_1 \cup Z_2 \subseteq \P^\bfa$.
For every $\bfb \in \N^{n_1+n_2}$, 
the correspondence scroll $C(Z;\bfb)$ is ``small scheme'' in the sense of \cite{EGHP}.
In fact, it is the union of the two linearly joined scrolls $C(Z_1;\bfb)$ and $C(Z_2;\bfb)$, that is
$$
C(Z_1;\bfb)\cap C(Z_2;\bfb) = \Span\left(C(Z_1;\bfb)\right)\cap \Span(C(Z_2;\bfb)) = C((p_1,p_2);\bfb).
$$
As a consequence, it has minimal degree and linear syzygies.
\end{example}

The construction of correspondence scrolls may be used to produce multiple structures on a given scheme with desirable properties.

\begin{example}[A rope on a line]
Let $d,n\in \N$. 
Consider the subscheme $Z = \Delta \cap X \subseteq (\P^1)^n$, where $\Delta$ is the diagonal and $X$ is the $d$-th thickening of a point $p \in \Delta$.
The scheme $C(Z;\bfb)$  is a multiple structure of degree $d$ on a line in $\P^N$ sitting inside the rational normal scroll $C(\Delta; \bfb)$.
For $d=2$, such structure is called a ``rope'' (see \cite[Remark 2.10]{NNS}) and is defined by a Gr\"obner basis of quadrics.
\end{example}

\begin{example}[Double structure on Veronese varieties]
Let $d,n \in \N$. 
Let $V$ be the image of the  embedding 
$\P^n \xhookrightarrow{\nu_d} \P^{{n + d \choose n}-1}  \subseteq  \P^{{n + d \choose n}+{n + d-1 \choose n}-1}$.
Let $Z\subseteq \A^{n+1}\times\A^{n+1}$ be  the subscheme  defined by the $2$-minors of the generic $(n+1)\times 2$ matrix and by all the $d$-forms in the variables $x_{1,0},\ldots, x_{1,n}$.
The correspondence scroll $C(Z;\bfb)$ where $ \bfb = (d,d-1)$ is a double structure on the Veronese variety $V$.

When $n=1$, $V$ is  a rational normal curve of degree $d$ in $\P^{2d}$.  
The curve $C(Z;\bfb)$ we obtain is a double structure on $V$ with linear syzygies and degree $2d$ (i.e. the same resolution as the rational normal curve in $\P^{2d}$),
cf. the main result of  \cite{Ma}.
When $n=2, d=2$ we obtain a Gorenstein double structure $X=C(Z;\bfb)$ on a Veronese surface $V \subseteq \P^5 \subseteq \P^8$.
In fact, $X$ is defined by the $2$-minors of a $4\times 4$ symmetric matrix obtained from a generic symmetric matrix by setting the last entry equal to 0.
Observe that in both special cases there exists no such double structure inside the linear span of $V$.
\end{example}

\begin{example}[Canonically embedded balanced ribbons]
In \cite{DFS} the authors study  canonically embedded balanced ribbons of odd genus $g$, motivated by their role  in \cite{AFS}.
For each $g=2b+1$, with $b\geq 1$, we can realize this embedded curve as the  section of  
  $C(2\Delta;\bfb)\subseteq \P^{2b+1}$ by the hyperplane  $x_{0,b}=x_{1,0}$, where $ \bfb = (b,b)$ and $\Delta \subseteq \P^1\times \P^1$ is the diagonal. 
Since the carpet is Calabi-Yau and arithmetically Cohen-Macaulay by Corollary \ref{CorollaryCompleteIntersection}, the hyperplane section is a canonical curve. 
This ribbon satisfies the analogue of Green's Conjecture, 
as proven in \cite{D}. 
\end{example}

\begin{example}[Reducible K3 surfaces]\label{K3 example}
As explained in~\cite{EiSc}, one can make a degenerate K3 surface as the union of two scroll surfaces that have the same type, $\Sigma(b_1,b_2)\subseteq\P^{b_1+b_2+1}$, and
meet along the rational normal curves $\Sigma(b_1)$ and $\Sigma(b_2)$ as well as two rulings---a reducible curve of arithmetic genus 1.

We have constructed the rational normal scroll $\Sigma(b_1,b_2)$ as the 
correspondence scroll associated to the diagonal $\Delta\subset \P^1\times \P^1$, which 
we may think of as the graph of the identity map. Let 
$\Delta'$ be the graph of the non-identity automorphism $\sigma_t$ of $\P^1$ that is defined, in a suitable coordinate system, by multiplication by a scalar $t\in \Bbbk\setminus\{0,1\}$. 
The examples
from~\cite{EiSc} can then be constructed as
$X_{\sigma_t}(\bfb) : = C(\Delta\cup\Delta'; \bfb).$ Since $\Delta\cup\Delta'$ is a divisor of type $(2,2)$ in $\P^1\times \P^1$, Corollary~\ref{CorollaryCompleteIntersection} gives a second proof that $X_{\sigma_t}$ is Calabi-Yau and Cohen-Macaulay,
thus $X_{\sigma_t}(\bfb)$ is  a reducible K3  surface

Note that there is another type of automorphism $\sigma'_t$ of $\P^1$: those that correspond to addition by
a  scalar $t\in \Bbbk \setminus\{0\}$. The automorphism $\sigma_t$ has two fixed points (in the given coordinate system they are 0 and $\infty$); the automorphism $\sigma'_t$ by contrast has only one (namely, $\infty$).
The paper \cite{EiSc}  treats only the case
where $\sigma$ has two distinct fixed points,
 but Corollary~\ref{CorollaryCompleteIntersection}  applies equally to both parts.

The ideal $J$ of the intersection $E:= \Delta\cap\Delta'\subseteq \A^4$ is the complete intersection of two bilinear forms $\ell, \ell'$.
Since $J$ has codimension 2 and is contained in both irrelevant ideals $(x_{0,0}, x_{0,1})$
and $(x_{1,0}, x_{1,1})$, these must be
associated primes. Both $\Delta$ and $\Delta'$ have class $\z_1+\z_2$ in the Chow ring, so the ``relevant'' part of $\Delta\cap\Delta'$ has class $2\z_1\z_2$. This is the class of two points of $\P^1\times\P^1$, that is, of two 2-planes through the origin in $\A^4$ other than the ``irrelevant ideals". Since the degree of the intersection is 4, and a complete intersection is unmixed,
We must have 
$$
J = (x_{0,0}, x_{0,1}) \cap (x_{1,0}, x_{1,1})\cap J'
$$
where $J$ is either primary or the the intersection of 2 distinct primes.

Returning to the construction we see that the two possibilities correspond to the two types of automorphism $\sigma$: if $\sigma$ has two distinct fixed points, then $C(E; \bfb)$ consists of 4 reduced curves: the two rational normal curves 
$C(V(x_{0,0}, x_{0,1}); \bfb)$,
$C(V(x_{0,0}, x_{0,1}); \bfb)$,
and the two distinct or one double line from the ruling, corresponding to 
$C(V(J'); \bfb)$. 
\end{example}

\subsection*{Acknowledgments}
The first author was partially supported by NSF grant No. 1502190. 
He would like to thank Frank-Olaf Schreyer, who pointed out in their joint work that the K3 carpets could be regarded as coming from correspondences.
The second author was supported by  NSF grant No. 1440140 
while he was a Postdoctoral Fellow at the Mathematical Sciences Research Institute in Berkeley, CA. 
He would like to thank Aldo Conca and Matteo Varbaro for some helpful comments.


\begin{thebibliography}{99}
\addcontentsline{toc}{section}{Bibliography}

\bibitem{AFS}
J. Alper, M. Fedorchuk,  D.I. Smyth,
\emph{Finite Hilbert stability of (bi) canonical curves},
Inv. Math. {\bf 191}  (2013), 671--718.

\bibitem{BaEi}
D. Bayer, D. Eisenbud,
\emph{Ribbons and their canonical embeddings},
Trans. Amer. Math. Soc. {\bf 347} (1995), 719--756.

\bibitem{Bl}
S. Blum, 
\emph{Subalgebras of bigraded Koszul algebras},
 J. Algebra {\bf 242} (2001), 795--809.

\bibitem{BrHe} 
W. Bruns, J. Herzog,
\emph{On the computation of a-invariants},
 manuscripta math. {\bf 77} (1992), 201--213.

\bibitem{CDG17}
A. Conca, E. De Negri, E. Gorla, 
\emph{Cartwright-Sturmfels ideals associated to graphs and linear spaces},
to appear in J. Combinatorial Algebra, arXiv:1705.00575 (2017).


\bibitem{DeSt}
C. De Concini, E. Strickland, 
\emph{On the variety of complexes},
 Young Tableaux in Combinatorics, Invariant Theory, and Algebra (1982), 277--297.

\bibitem{D} A. Deopurkar,
\emph{The canonical syzygy conjecture for ribbons},
Math. Z. {\bf 288} (2018), 1157--1164.

\bibitem{DFS}
A. Deopurkar, M. Fedorchuk, D. Swinarski,
\emph{Gr\"obner techniques and ribbons},
 Albanian J. Math. {\bf 8} (2014), 55--70.

\bibitem{DES}
P. Diaconis, D. Eisenbud, B. Sturmfels,
\emph{Lattice walks and primary decomposition}, 
Mathematical Essays in Honor of Gian-Carlo Rota. Birkhäuser Boston (1998), 173--193.

\bibitem{Ei}
D. Eisenbud, 
\emph{On the resiliency of determinantal ideals},
 Advanced Stud. in Pure Math {\bf 11} (1987), 29--38.

\bibitem{EGHP}
D. Eisenbud, M. Green, K. Hulek, S. Popescu,
\emph{Small schemes and varieties of minimal degree},
Amer. J. Math. {\bf 128} (2006), 1363--1389.

\bibitem{EiHa}
D. Eisenbud, J. Harris,
\emph{On varieties of minimal degree},
 Proc. Sympos. Pure Math. {\bf 46}, No. 1 (1987).

\bibitem{EiSc}
D. Eisenbud, F.O. Schreyer,
\emph{Equations and syzygies of K3 carpets and unions of scrolls},
arXiv:1804.08011 (2018).

\bibitem{GaPu}
F. Gallego, B. Purnaprajna,  \emph{Degenerations of K3 surfaces in projective space},
Trans. Amer. Math. Soc. {\bf 349} (1997), 2477--2492.

\bibitem{GaPu1}
F. Gallego, B. Purnaprajna, 
\emph{On the canonical rings of covers of surfaces of minimal degree},
Trans. Amer. Math. Soc. {\bf 355} (2003),  2715--2732. 

\bibitem{GW}
S. Goto, K. Watanabe,
\emph{On graded rings, II ($Z^n$-graded rings)},
 Tokyo J. Math {\bf 1} (1978), 237--261.
 
 \bibitem{Ha}
 J. Harris (with the collaboration of David Eisenbud),
 \emph{Curves in projective space}, S\'eminaire
de Math\'ematiques Sup\'erieures {\bf 85}, Presses de l’Universit\'e de Montr\'eal, Que. (1982).
 
 \bibitem{HoEa}
 M. Hochster, A. Eagon, \emph{Cohen-Macaulay rings, invariant theory, and the generic perfection of determinantal loci},
  Amer. J. Math, {\bf 93}  (1971), 1020--1058.

\bibitem{HoSu}
S. Hosten, S. Sullivant,
\emph{Ideals of adjacent minors},
 J. Algebra {\bf 277} (2004), 615--642.


\bibitem{Ma}
N. Manolache, \emph{Double rational normal curves with linear syzygies},
manuscripta math. {\bf 104} (2001), 503--517.

\bibitem{NNS}
U. Nagel, R. Notari, M.L. Spreafico. 
\emph{Curves of degree two and ropes on a line: their ideals and even liaison classes}, J. Algebra {\bf 265} (2003), 772--793.

\bibitem{Sch} F.O. Schreyer,
\emph{Syzygies of canonical curves and special linear series},
 Math. Ann. {\bf 275} (1986), 105--137.

\bibitem{Sh}
T. Shibuta, \emph{Gr\"obner bases of contraction ideals}, J. Alg. Combin. {\bf 36} (2012), 1--19.

\bibitem{Stu}
B. Sturmfels, 
\emph{Gr\"obner bases and convex polytopes},
 Vol. 8. American Mathematical Soc. (1996).

\end{thebibliography}
\end{document}